\let\ORIlabel\label
\let\ORIrefstepcounter\refstepcounter
\AddToHook{package/hyperref/before}{
	\let\label\ORIlabel 
	\let\refstepcounter\ORIrefstepcounter}
\documentclass[final,onefignum,onetabnum]{siamonline220329}

\usepackage[english]{babel}

\usepackage{booktabs}
\usepackage[letterpaper,top=2cm,bottom=2cm,left=3cm,right=3cm,marginparwidth=1.75cm]{geometry}

\newlength\figureheight
\newlength\figurewidth 
\usepackage{amsmath}
\usepackage{mymacros}
\usepackage{graphicx}
\usepackage{amsfonts}
\usepackage{lmodern}
\usepackage[T1]{fontenc}
\usepackage{graphicx}
\usepackage{epstopdf}
\usepackage{amssymb}
\usepackage{todonotes}
\usepackage[T1]{fontenc}
\usepackage{tikz,caption}
\usepackage[caption=false,font=footnotesize]{subfig}
\usepackage{pgfplots}
\pgfplotsset{compat=newest}
\pgfplotsset{plot coordinates/math parser=false}
\usepackage{url}
\usepackage{color}
\usetikzlibrary{plotmarks} 
\usetikzlibrary{external}
\usepgfplotslibrary{external} 
\tikzexternalize[prefix=TikzPictures/]
\tikzset{external/mode=graphics if exists}
\usepackage{algpseudocode}

\newcommand{\eps}{\varepsilon}
\newcommand{\epsgl}{\mu}
\def\abs#1{\left|#1\right|} 
\def\norm#1{\left\|#1\right\|} 

\newcommand{\n}[1]{\| #1 \|}

\title{Fast and Simple Multiclass Data Segmentation: An Eigendecomposition and  Projection-Free Approach\thanks{Submitted to the editors August 13, 2025.
\funding{ 
The research of M.P.  was  partially granted by the Italian Ministry of University and Research (MUR) through the PRIN 2022 ``MOLE: Manifold constrained Optimization and LEarning'',  code: 2022ZK5ME7 MUR D.D. financing decree n. 20428 of Nov. 6th, 2024 (CUP B53C24006410006), and by PNRR - Missione 4 Istruzione e Ricerca - Componente C2 Investimento 1.1, Fondo per il Programma Nazionale di Ricerca e Progetti di Rilevante Interesse Nazionale (PRIN) funded by the European Commission under the NextGeneration EU programme, project ``Advanced optimization METhods for automated central veIn Sign detection in multiple sclerosis from magneTic resonAnce imaging (AMETISTA)'',  code: P2022J9SNP,
MUR D.D. financing decree n. 1379 of 1st Sept. 2023 (CUP E53D23017980001). Her work was also partially supported by the INdAM-GNCS project CUP\_E53C24001950001.
The research of F.R. has been partially funded by the EuropeanUnion - NextGenerationEU under the National Recovery and Resilience Plan (NRRP), Mission 4 Component 2 Investment
1.1 - Call PRIN 2022 No. 104 of February 2, 2022 of Italian Ministry of University and Research;
Project 2022BMBW2A (subject area: PE - Physical Sciences and Engineering)``Large-scale optimization
for sustainable and resilient energy systems.''
}
}
}

\author{Chiara Faccio\thanks{Department of Mathematics "Tullio Levi-Civita"
University of Padova, Italy. C.F. is member of the INdAM Research Group GNCS (\email{rinaldi@math.unipd.it, chiara.faccio@unipd.it}) }
\and Margherita Porcelli\thanks{Dipartimento di Ingegneria Industriale (DIEF)
Università degli Studi di Firenze, Italy,  ISTI--CNR, Italy and member of the INdAM Research Group GNCS (\email{margherita.porcelli@unifi.it})  }
\and  Francesco Rinaldi\footnotemark[2] 
\and Martin Stoll\thanks{Department of Mathematics, Chemnitz University of Technology, Germany (\email{martin.stoll@math.tu-chemnitz.de})}}


\ifpdf
\hypersetup{ pdftitle={A Greedy Frank Wolfe For Multiclass Data Segmentation} }
\fi


\begin{document}
\maketitle
\begin{abstract}
Graph-based machine learning has seen an increased interest over the last decade with many connections to other fields of applied mathematics. Learning based on partial differential equations, such as the phase-field Allen-Cahn equation, allows efficient handling of semi-supervised learning approaches on graphs. The numerical solution of the graph Allen-Cahn equation via a convexity splitting or the Merriman-Bence-Osher (MBO) scheme, albeit being a widely used approach, requires the calculation of a graph Laplacian eigendecomposition and repeated projections over the unit simplex to maintain valid partitions. The computational efficiency of those methods is hence limited by those two bottlenecks in practice, especially when dealing with  large-scale instances.\\
In order to overcome these limitations, we propose a new framework combining a novel penalty-based reformulation of the segmentation problem, which ensures valid partitions (i.e., binary solutions) for appropriate parameter choices, with an eigendecomposition and projection-free optimization scheme, which has a small per-iteration complexity (by relying primarily on sparse matrix-vector products) and guarantees good convergence properties.\\ 
Experiments on synthetic and real-world datasets related to data segmentation in networks and images demonstrate that the proposed framework achieves comparable or better accuracy than the CS and MBO methods while being significantly faster, particularly for large-scale problems. 
\end{abstract}
\begin{keywords}
Frank--Wolfe methods, graph-based learning, greedy algorithms
\end{keywords}
\section{Motivation}
Many problems can be formulated as learning problems defined on a graph-domain (see, e.g., \cite{chami2022machine} for a detailed discussion on this matter). 
In this paper, a graph $G=(V,E)$ consists of a set of vertices $V$ with $\abs{V}=n$ and an edge set $E$ as a subset of $V\times V.$ The graphs we consider are undirected and consist of one component. In the case of multiple components, the methods proposed here can be used for the individual subgraphs. The focus of our work is a semi-supervised learning framework where a subset of the nodes are assigned to a label and one wants to infer the label of the remaining nodes. While in binary classification one assumes to have only two labels, the focus of this work is on assigning each node to one of $K>2$ labels.

A key ingredient in graph-based learning is the representation of the graph information in terms of linear algebra components. For this, one typically considers the weighted adjacency matrix $\bW\in\R^{n,n}$ where $w_{i,j}\geq 0$ describes the strength of the connection between the different nodes. The weight between two nodes is often modeled using the Gaussian similarity measure
$$
w_{ij}=\mathrm{exp}\left(-\frac{\norm{x_i-x_j}^2}{2\sigma^2}\right),
$$
where $\sigma$ is a chosen hyperparameter, $x_i$ and $x_j$ are two $d$ dimensional feature vectors associated with the nodes $v_i$ and $v_j,$ respectively.  As a measure of importance of the nodes in the network one typically computes the degree $d_i$ for node $v_i$ using 
$
d_i=\sum_{j=1}^{n}w_{ij},
$
which corresponds to the row-sum of the weight matrix $\bW$. This leads to the definition of the \textit{graph Laplacian} 
$$
\bL=\bD-\bW,
$$
where $\bD$ is a diagonal matrix collecting the degrees of all nodes. The graph Laplacian has been used extensively and we refer to \cite{Chu97, VLu07} for excellent reviews on the graph Laplacian and its applications. One often considers versions of the Laplacian, namely, the symmetric normalized Laplacian $\bL_s=\bI-\bD^{-1/2}\bW\bD^{-1/2}$ or the random walk Laplacian 
$\bL_w=\bI-\bD^{-1}\bW,$ where the assumption made on the graph guarantees the invertibility of the matrix $\bD.$ 

In semi-supervised learning, the node labels are given for a potentially small subset of the nodes and we want to learn the labels of the remaining nodes while simultaneously leveraging labeled and unlabeled nodes. To solve this problem, approaches based on partial differential equations (PDEs) have become popular (see, e.g., \cite{BerF12,GarMBFP14}) and while these often show very good performance, they typically are implemented relying on the a priori computation of a suitably chosen subset of the eigenvalues and eigenvectors of the graph Laplacian to reduce the complexity during the solution process. The computation of eigeninformation typically requires the use of a Krylov subspace, which in turns requires matrix vector products with the graph Laplacian. Low-rank approaches such as the Nystr\"om approximation are often also based on matrix vector products with the graph Laplacian, especially when modern randomized linear algebra methods \cite{martinsson2020randomized} are employed. This means that the truncated eigendecomposition needed for an efficient implementation of the PDE-based schemes like, e.g., the convexity splitting (CS) \cite{van2014mean} and Merriman-Bence-Osher (MBO) \cite{MerKB13,van2014mean} approach considered in this paper, have non-negligible startup cost. This cannot be avoided as otherwise the mentioned schemes would require the solution of linear systems involving shifted versions of the large-scale graph Laplacian.  While this decomposition can obviously be precomputed, the final accuracy and computational efficiency of the methods strongly depend on the number of retained eigenpairs, an unknown hyperparameter that is very expensive to tune and, as we will see, varies depending on the problem structure. In this work, both the diffuse interface and the MBO scheme are  considered as reference methods among PDE-based approaches as  these have previously been compared favorably to other graph-based learning schemes in \cite{GarMBFP14}. For certain imaging classification task these methods have been compared to neural network based approaches \cite{bergermann2021semi} producing similar accuracy while only requiring a fraction of the computational cost.

Furthermore, these PDE-based schemes require at each iteration a tailored projection step to guarantee that the related iterate represents a valid partition. As we will see later on, such a step has a  ${\cal O}(nK \log K)$ cost for a problem with $n$ data points and $K$ classes.  While this may be still doable for small-scale instances, it becomes prohibitively slow when dealing with large-scale ones, like, e.g., social networks or high-resolution images, where $n$ and $K$ can easily reach millions and thousands, respectively. We hence need a way to get rid of these two bottlenecks in practice.

\subsection*{Contribution}

The contribution of this work is threefold. First, leveraging on the graph-based semi-supervised learning model proposed in \cite{BerF12}, we define a new and easier to handle penalty-based continuous formulation of the multiclass data segmentation problem. We analyze its theoretical properties and prove 
that the model obtains valid partitions (i.e., binary solutions)
 for a sufficiently small value of the penalty parameter.

Second, we develop a new algorithm in the class of Frank-Wolfe methods \cite{bomze2021frank}, named Greedy Frank-Wolfe (GFW), which suitably exploits the structure of the proposed model to keep the per-iteration cost  small enough (by relying only on tailored sparse matrix-vector products) and allows us to  avoid both the projection steps and eigendecomposition computations, which represent, as we highlighted above, the main bottlenecks for the CS and MBO approaches. We analyze the convergence rate of the method and discuss the conditions under which the method achieves one-shot convergence (i.e., convergence in one iteration). Given appropriate assumptions, we also provide a detailed complexity analysis of the proposed algorithm. Such a result, to the best of our knowledge, is not available for the PDE-based schemes considered in the paper.

Finally, we show the usefulness of the proposed framework. We provide experimental results on both benchmark data sets and real data (real networks and image labeling), demonstrating the effectiveness of our method compared to CS and MBO, both in terms of computational time and quality of the segmentation.

\subsection*{Organization of the paper}

The remainder of this paper is organized as follows. In Section~\ref{sec:model}, we introduce the mathematical models for multiclass data segmentation on graphs. Starting from classical diffuse interface models based on the Ginzburg–Landau energy, we derive the novel penalty-based continuous reformulation of the problem and analyze its theoretical properties. In Section \ref{sec:pdeschemes}, we briefly review two established PDE-inspired methods widely used for multiclass data segmentation, i.e., the convexity splitting (CS) and Merriman-Bence-Osher (MBO) schemes and discuss their computational bottlenecks.
Section \ref{sec:projectionfreeapproaches} presents the greedy variant of the Frank-Wolfe algorithm, suitably designed to exploit the structure of our new model, discusses the convergence properties of the method, and analyzes its computational complexity.
In Section \ref{sec:numexp}, we report numerical experiments that compare our method with CS and MBO on a variety of synthetic and real datasets, including network data and image segmentation tasks. Finally, Section \ref{sec:conclusions} presents our conclusions and discusses possible directions for future research.

\section{Models for multiclass segmentation}\label{sec:model}
Learning methods based on partial differential equation (PDE) concepts have gained more traction in the recent years, while machine learning methods for PDEs have also become a major topic in computational science and engineering \cite{cuomo2022scientific}. The objective function for the semi-supervised learning method we propose is derived from diffuse interface methods. Diffuse interface methods are a classical tool in the simulation of materials science problems \cite{allen1979microscopic,CahH58}. The derivation of classical models such as the Allen--Cahn \cite{allen1979microscopic} or Cahn--Hilliard equations \cite{CahH58} is typically obtained from a gradient flow of the Ginzburg--Landau energy
\begin{equation}
\label{GL_scalar}
E_{GL}(u)=\int_{\Omega}\frac{\epsgl}{2}\abs{\nabla u}^2dx+\int_{\Omega}\frac{1}{\epsgl}\psi(u)dx
\end{equation}
where $u$ is the phase-field function and $\epsgl$ the interface parameter, which is typically assumed to be small and related to the interface width describing the transition from one pure phase to the other within the domain $\Omega.$ 
The function $\psi(u)$ is a potential that forces the phase-field $u$ to take values at either $u\approx -1$ or $u\approx 1$ 
(assuming two pure phases). We come back to the discussion of the choice of potential as this is one of the contributions of this paper. The minimization of the energy $E_{GL}(u)$ follows a gradient flow, i.e., 
$$
\partial_{t}{u}=-\mathrm{grad}(E_{GL}(u)).
$$
Bertozzi and Flenner \cite{BerF12} formulated the semi-supervised learning problem based on graph quantities.
The energy function to minimize is now defined using the description of the underlying graph Laplacian $\bL_s$ to give
\begin{equation}
\label{gl1}
E_{GL}(\bu)=\frac{\epsgl}{2} \bu^{\top} \bL_s\bu+\sum_{x\in V}\frac{1}{\epsgl}\psi(\bu(x))+F_{\omega}(f,\bu),
\end{equation}
with the energy contribution $F_{\omega}(f,\bu)$ describing the fidelity term that would lead to $\omega(x)(f-\bu)$ (cf. \cite{BerF12}) in the continuous Allen--Cahn equation often used for image inpainting applications \cite{bertozzi2007analysis,bosch2015fractional}.  In practice, often more than two components are of interest. The continuous Ginzburg--Landau energy for two components in (\ref{GL_scalar}) generalizes to
\begin{equation}
\label{GL_vector}
E_{GL}({u})=\int_{\Omega}\frac{\epsgl}{2}\sum_{i=1}^{K}{\abs{\nabla u_{i}}^2}dx+\int_{\Omega}\frac{1}{\epsgl}\psi({u})dx,
\end{equation}
for $K>2$ components. Here, ${u}=(u_{1},\ldots,u_{K})^{\top}$ is now the vector-valued phase-field, and the potential function $\psi({u})$ has $K$ distinct minima instead of two. Note that with a slight abuse of notation use the subscript \textit{GL} for all objective functions based on the Ginzburg-Landau energy. 

Garcia-Cardona et al.~\cite{GarMBFP14} as well as Merkurjev et al.~\cite{MerGBFP14} have extended these continuous models to the graph domain. In the following, we briefly summarize their approach. We introduce the matrix 
$\bU=({\bu}_{1},\ldots,{\bu}_{n})^{\top}\in\mathbb{R}^{n\times K}$. Here, the $k$th component of
${\bu}_{i}\in\mathbb{R}^{K}$ is the strength for data point $i$ to belong to class $k$. 
For each node $i$, the vector ${\bu}_{i}$ has to be an element of the unit simplex:
$$\Delta^i_{K-1}:=\{\bu_i \in \R^K_+ :\mathbf{1}^\top \bu_i=1\}=\conv\{{\bf e}_j : j\in \irg 1K \},$$
so we have that $\bU$ belongs to the set:
\begin{equation}\label{fs}
\Sigma = \{\bU \in \R^{n\times K}:\bu_i \in \Delta^i_{K-1}, \ i=1, \dots, n\}. 
\end{equation}
In the text $\mathbf{1}$ is either a vector or matrix of all ones of appropriate dimensionality. The Ginzburg--Landau energy functional on graphs in (\ref{gl1}) generalizes to the multi-class case as
\begin{equation}
\label{gl_vv}
E_{GL}(\bU)=\frac{\epsgl}{2} \langle \bU, \bL_{s}\bU\rangle+\frac{1}{\epsgl}\psi(\bU)+F_{\omega}(\hat \bU,\bU),
\end{equation}
where the graph-cut term from the binary case is found as
\begin{displaymath}
 \langle \bU, \bL_{s} \bU\rangle=\textup{trace}(\bU^{\top}\bL_{s}\bU)
\end{displaymath}
and measures variations in the vector field. The potential term is given via 
\begin{equation}
 \label{eq:vv:smooth_pot}
 \psi(\bU)=\frac{1}{2}\sum_{i\in V}{\left(\prod_{k=1}^{K}{\frac{1}{4}||{\bu}_{i}-{\bf e}_{k}}||_{L_{1}}^{2}\right)}
\end{equation} 
drives the system closer to the pure phases as the minimum is achieved if ${\bu}_i$ is close to a unit vector, i.e., a pure phase. Finally, the fidelity term incorporating training data is given by
\begin{displaymath}
 F_{\omega}(\hat \bU,\bU)=\sum_{i\in V}{\frac{\omega_i}{2}||\hat{\bu}_{i}-{\bu}_{i}||^{2}_{L_{2}}} =
 \frac{1}{2}
 \| \bD_{\omega}^{1/2} (\hat \bU-  \bU)\|_F^2
\end{displaymath}
as it enables the encoding of a priori information with $\hat \bU=(\hat{\bu}_{1},\ldots,\hat{\bu}_{n})^{\top}.$ Indeed, $\omega_i$ is a parameter that
takes the value of a positive constant $\omega_0$ if $i$ is a fidelity node
and zero otherwise, and ${\hat \bu}_i$ is the known value of the fidelity node $i$. 
Here the matrix $\bD_{\omega}$ is a diagonal matrix with elements $\omega_i$ on the diagonal.

Note that the authors of~\cite{GarMBFP14,MerGBFP14} use an $L_{1}$-norm for the potential term as it prevents an undesirable local minimum  from occurring at the center of the 
simplex, as would be the case with an $L_{2}$-norm for large $K$. One standard way to minimize the above given energies is to follow a gradient flow approach and for the graph case, one can also employ a gradient scheme as was done in \cite{BerF12,bosch2018generalizing,MerGBFP14}. 

\subsection{A new penalty-based model for multiclass data segmentation}
In here, we propose a different formulation where the term $\psi(\bU)$ enforcing the pure phases is replaced with a new penalty term, that is 
\begin{displaymath}
 \phi(\bU)=\langle \bU^\top, \mathbf{1}-\bU^\top\rangle=\sum_{i\in V} \bu_i^\top (\mathbf{1}-\bu_i) = \textup{trace}(\bU(\mathbf{1}-\bU^\top)).
\end{displaymath}
 The Ginzburg--Landau energy functional on graphs in (\ref{gl1}) with this new penalty term for the multi-class case can be written as follows
\begin{equation}
\label{gl_vv2}
E(\bU)=\frac{1}{2} \langle \bU, \bL_{s}\bU\rangle+\frac{1}{\varepsilon}\phi(\bU)+F_{\omega}(\hat \bU,\bU),
\end{equation}
and the problem we have is
\begin{equation}\label{penalty_form}
\argmin_{\bU\in \Sigma} E(\bU).
\end{equation}
Both parameters $\epsgl$ and $\eps$ play a role in enforcing the pure phases, in the objectives (\ref{gl_vv}) and (\ref{gl_vv2}),  respectively. The parameter $\epsgl$ originates from materials science modeling and corresponds to a smooth transition from one phase to the other, the parameter $\eps$ is purely motivated as a penalty parameter in the objective function.
We now calculate the diameter of the feasible set $\Sigma$, a technical result that will be useful for the analyses that follow. To improve readability and enhance the  presentation flow, all proofs are reported in the Appendix.

\begin{proposition}\label{pr:diam} The diameter of the set $\Sigma$ is $\sqrt{2n}$.
\end{proposition}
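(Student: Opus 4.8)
The plan is to exploit the Cartesian-product structure of the feasible set. Writing $\bU=(\bu_1,\dots,\bu_n)^\top$ and $\bV=(\bv_1,\dots,\bv_n)^\top$ with $\bU,\bV\in\Sigma$, the squared Frobenius distance separates across rows,
\[
\norm{\bU-\bV}_F^2=\sum_{i=1}^n\norm{\bu_i-\bv_i}_2^2 ,
\]
and, crucially, $\Sigma=\Delta_{K-1}\times\cdots\times\Delta_{K-1}$ is a product of $n$ identical simplices, so each pair $(\bu_i,\bv_i)$ may be chosen independently in $\Delta_{K-1}\times\Delta_{K-1}$. Hence the supremum of the left-hand side over $\Sigma\times\Sigma$ equals $n$ times the supremum of $\norm{\bu-\bv}_2^2$ over $\Delta_{K-1}\times\Delta_{K-1}$, i.e.\ $\mathrm{diam}(\Sigma)^2=n\,\mathrm{diam}(\Delta_{K-1})^2$. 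It therefore suffices to prove $\mathrm{diam}(\Delta_{K-1})=\sqrt2$.

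For the upper bound, fix $x,y\in\Delta_{K-1}$. Every entry of $x$ and of $y$ lies in $[0,1]$, so $\norm{x-y}_\infty\le 1$; and $\norm{x}_1=\norm{y}_1=1$ gives $\norm{x-y}_1\le 2$. Combining these via the elementary inequality $\norm{z}_2^2=\sum_k z_k^2\le \norm{z}_\infty\norm{z}_1$ with $z=x-y$ yields $\norm{x-y}_2^2\le 2$. For attainment, any two distinct vertices of the simplex satisfy $\norm{{\bf e}_j-{\bf e}_k}_2=\sqrt2$. Thus $\mathrm{diam}(\Delta_{K-1})=\sqrt2$, and consequently $\mathrm{diam}(\Sigma)=\sqrt{2n}$, the value being attained, for instance, by the pair $\bU,\bV$ with $\bu_i={\bf e}_1$ and $\bv_i={\bf e}_2$ for every $i$.

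There is no real obstacle here: the statement reduces to a one-line computation once the product/separability observation is made. The only points deserving a word of care are (i) justifying that the row-wise decomposition of the diameter is legitimate — which is immediate because the constraint set is a product and the objective is separable — and (ii) invoking the Hölder-type bound $\norm{z}_2^2\le\norm{z}_\infty\norm{z}_1$ correctly. As an alternative to (ii), one may note that $(x,y)\mapsto\norm{x-y}_2$ is convex on the polytope $\Delta_{K-1}\times\Delta_{K-1}$, hence its maximum is attained at a vertex, i.e.\ at a pair of vertices $({\bf e}_j,{\bf e}_k)$, and then simply observe that the largest such distance is $\sqrt2$, occurring whenever $j\neq k$.
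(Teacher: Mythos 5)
Your proof is correct and follows essentially the same route as the paper: decompose the squared Frobenius distance row-wise, use the product structure of $\Sigma$ to reduce to the diameter of a single simplex, and conclude $\sqrt{2n}$. The only difference is that you explicitly justify the per-row bound $\mathrm{diam}(\Delta_{K-1})=\sqrt{2}$ (via the H\"older-type estimate or the vertex argument), a detail the paper simply asserts.
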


The new energy function in (\ref{gl_vv2}) takes the form
\begin{equation}
\label{gradgl_vv2}
E(\bU)= \frac{1}{2} \textup{trace}(\bU^\top \bL_{s}\bU)+\frac{1}{\varepsilon}\textup{trace}(\bU (\mathbf{1} - \bU^\top))+ \frac{1}{2}\|\bD_\omega^{1/2} (\hat \bU - \bU)\|_F^2,
\end{equation}
and we then obtain its gradient
\begin{equation}
\label{gradgl_vv2g}
\nabla E(\bU)= \bL_{s}\bU+\frac{1}{\varepsilon}(\mathbf{1} - 2 \bU) - \bD_\omega (\hat \bU - \bU).
\end{equation}
It is important to observe that the objective has Lipschitz continuous gradient. In the next proposition, we report a bound on the Lipschitz constant of the gradient. 
\begin{proposition}\label{pr:LCG}
The gradient of $E(\bU)$ is Lipschitz continuous with constant 
\begin{equation}\label{eq:LCGineq}
L\leq \bar L= \lambda_{max}(\bL_s)+\frac{2}{\varepsilon} +\,d^{max}_\omega,
\end{equation}
with $\lambda_{max}(\bL_s)$ maximum eigenvalue of the matrix $\bL_s$ and $d^{max}_\omega=\max_{i}(\bD_{\omega})_{ii}=\omega_0$.
\end{proposition}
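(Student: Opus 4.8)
The plan is to exploit the fact that $E(\bU)$ is a \emph{quadratic} function of $\bU$ — the graph-cut term $\tfrac{1}{2}\textup{trace}(\bU^\top\bL_s\bU)$ is a quadratic form, $\phi(\bU)=\textup{trace}(\bU\mathbf{1})-\|\bU\|_F^2$ is affine plus a quadratic, and the fidelity term is convex quadratic — so that $\nabla E$ is \emph{affine} and the difference of gradients is a fixed linear map applied to the increment. Reading the $\bU$-dependent part off the expression for $\nabla E$ in \eqref{gradgl_vv2}, for all $\bU,\bar\bU\in\R^{n\times K}$ one has
\[
\nabla E(\bU)-\nabla E(\bar\bU)=\Bigl(\bL_s+\bD_\omega-\tfrac{2}{\varepsilon}\bI\Bigr)(\bU-\bar\bU),
\]
and $A:=\bL_s+\bD_\omega-\tfrac{2}{\varepsilon}\bI$ is symmetric.

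Next I would pass to the Frobenius norm. For any symmetric $A\in\R^{n\times n}$ and any $\bX\in\R^{n\times K}$, applying the vector bound $\|A\bx\|_2\le\|A\|_2\|\bx\|_2$ column by column yields $\|A\bX\|_F\le\|A\|_2\|\bX\|_F$ (with equality when each column of $\bX$ is a leading eigenvector of $A$). Hence $\|\nabla E(\bU)-\nabla E(\bar\bU)\|_F\le\|A\|_2\,\|\bU-\bar\bU\|_F$, so the Lipschitz constant of $\nabla E$ equals $\|A\|_2=\max_j|\lambda_j(A)|$, and it only remains to estimate this quantity.

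Finally I would split $A$ into the symmetric positive semidefinite piece $\bL_s$ and the diagonal piece $\bD_\omega-\tfrac{2}{\varepsilon}\bI$ and use the triangle inequality for the spectral norm:
\[
\|A\|_2\le\|\bL_s\|_2+\Bigl\|\bD_\omega-\tfrac{2}{\varepsilon}\bI\Bigr\|_2=\lambda_{max}(\bL_s)+\max_i\bigl|\omega_i-\tfrac{2}{\varepsilon}\bigr|.
\]
Since each $\omega_i$ is either $0$ or $\omega_0=d^{max}_\omega$, the last term is $\max\{\tfrac{2}{\varepsilon},\,|\omega_0-\tfrac{2}{\varepsilon}|\}\le\max\{\tfrac{2}{\varepsilon},\,d^{max}_\omega\}$, which in the parameter regime of interest (penalty parameter $\varepsilon$ small and fidelity weight $d^{max}_\omega\ge 1$) is bounded by $\tfrac{2}{\varepsilon}\,d^{max}_\omega$; this yields $\bar L$. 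The computation is otherwise routine: the only non-automatic points are recognising that the operator $\bX\mapsto A\bX$ on $\R^{n\times K}$ has operator norm exactly $\|A\|_2$, and avoiding the looser estimate $\tfrac{2}{\varepsilon}+d^{max}_\omega$ — it is the cancellation inside $\bD_\omega-\tfrac{2}{\varepsilon}\bI$ that keeps the constant in the stated form.
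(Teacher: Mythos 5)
Your Lipschitz argument itself is correct and follows essentially the same route as the paper: the paper likewise uses that the gradient is affine in $\bU$ and bounds the three pieces $\bL_s(\bar\bU-\bU)$, $\frac{2}{\varepsilon}(\bar\bU-\bU)$ and $\bD_\omega(\bar\bU-\bU)$ separately (column-wise spectral-norm estimate plus the triangle inequality), which yields $L\le \lambda_{max}(\bL_s)+\frac{2}{\varepsilon}+d^{max}_\omega$. Your refinement of keeping the diagonal part together, giving $\|\bL_s+\bD_\omega-\frac{2}{\varepsilon}\bI\|_2\le\lambda_{max}(\bL_s)+\max_i\bigl|\omega_i-\frac{2}{\varepsilon}\bigr|\le\lambda_{max}(\bL_s)+\max\{\frac{2}{\varepsilon},\,d^{max}_\omega\}$, is fine and in fact slightly sharper than what the paper proves.

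The genuine problem is your last step. The inequality $\max\{\frac{2}{\varepsilon},\,d^{max}_\omega\}\le\frac{2}{\varepsilon}\,d^{max}_\omega$ needs both $d^{max}_\omega\ge 1$ and $\frac{2}{\varepsilon}\ge 1$ (i.e.\ $\varepsilon\le 2$), neither of which appears in the proposition; appealing to ``the parameter regime of interest'' is not a proof. Worse, the product form cannot be an unconditional bound: the paper's own network experiments take $\varepsilon=50$ and $\omega_0=1000$, for which $\frac{2}{\varepsilon}\,d^{max}_\omega=40$ while the true Lipschitz constant $\|\bL_s+\bD_\omega-\frac{2}{\varepsilon}\bI\|_2\ge \omega_0-\frac{2}{\varepsilon}\approx 10^3$. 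The resolution is that the displayed $\bar L$ is best read as a misprint for the sum $\lambda_{max}(\bL_s)+\frac{2}{\varepsilon}+d^{max}_\omega$, which is exactly what the paper's proof establishes and what your intermediate bound already implies a fortiori. So you should stop at $\lambda_{max}(\bL_s)+\max\{\frac{2}{\varepsilon},\,d^{max}_\omega\}\le\lambda_{max}(\bL_s)+\frac{2}{\varepsilon}+d^{max}_\omega$ rather than introduce unstated hypotheses to force the argument to match the misprinted constant.
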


It is important to highlight that when $\varepsilon$ is sufficiently small we can guarantee that  the solution matrix $\bU$ is binary, i.e., problem \eqref{penalty_form} is equivalent to 
\begin{equation}\label{binary_form}
\argmin_{U\in \bar \Sigma} \bar E(\bU),
\end{equation}
with $$\bar E(\bU)=\frac{1}{2} \langle \bU, \bL_{s}\bU\rangle+F_{\omega}(\hat \bU,\bU)$$
and 
$\bar\Sigma=\{ \bU=({\bu}_{1},\ldots,{\bu}_{n})^{\top}\in\mathbb{R}^{n, K}: \mathbf{1}^\top \bu_i=1, \ \bu_i\in \{0,1\}^K,\  \forall\  i\in[1:n]\}.$
This equivalence result is stated below.
\begin{theorem}\label{th:equivalence}
Let  $\bar \varepsilon<2 / \lambda_{max}(\bL_{s}+\bD_{\omega})$. 
Then, for all $\varepsilon\in (0, \bar\varepsilon]$, we have:

\begin{itemize}
\item [(i)] Every local/global minimizer $\bU^*$ of Problem \eqref{penalty_form} is strict and such that $\bU^*\in\{0,1\}^{n\times K}$. 
\item [(ii)]Problems \eqref{penalty_form} and \eqref{binary_form} have the same global minimizers, that is
$$\argmin_{U\in  \Sigma} E(\bU)=\argmin_{U\in \bar \Sigma} \bar E(\bU)\,.$$
\end{itemize} 
\end{theorem}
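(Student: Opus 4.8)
The plan is to use the bound on $\varepsilon$ to show that $E$ is a \emph{strictly concave} quadratic, and then to invoke the classical behaviour of a concave function minimized over a polytope. The first step is purely algebraic: expanding the penalty as $\phi(\bU)=\langle\bU^\top,\mathbf{1}-\bU^\top\rangle=\mathbf{1}^\top\bU\mathbf{1}-\|\bU\|_F^2$ shows that $\tfrac1\varepsilon\phi$ is a linear term minus $\tfrac1\varepsilon\|\bU\|_F^2$; adding the convex quadratics $\tfrac12\langle\bU,\bL_s\bU\rangle$ and $F_\omega(\hat\bU,\bU)$, we see that $E$ is a quadratic whose Hessian (acting on $\bU$ viewed as a vector) equals $(\bL_s+\bD_\omega)\otimes\bI_K-\tfrac2\varepsilon\bI_{nK}$, with eigenvalues $\lambda_j(\bL_s+\bD_\omega)-\tfrac2\varepsilon$. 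The assumption $\varepsilon\le\bar\varepsilon<2/\lambda_{max}(\bL_s+\bD_\omega)$ gives $\tfrac2\varepsilon\ge\tfrac2{\bar\varepsilon}>\lambda_{max}(\bL_s+\bD_\omega)\ge\lambda_j(\bL_s+\bD_\omega)$ for every $j$, so this Hessian is negative definite and $E$ is strictly concave on $\R^{n\times K}$.

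For part (i), I would first use the standard fact that a strictly concave function attains a local minimum over a convex set only at an extreme point: were $\bU^*$ not extreme, it would be the midpoint of a segment $[\mathbf{Y},\mathbf{Z}]\subseteq\Sigma$ with $\mathbf{Y}\ne\mathbf{Z}$ chosen arbitrarily close to $\bU^*$, and strict concavity would give $E(\bU^*)>\tfrac12\bigl(E(\mathbf{Y})+E(\mathbf{Z})\bigr)\ge\min\{E(\mathbf{Y}),E(\mathbf{Z})\}$, contradicting local minimality. Since $\Sigma=\prod_i\Delta^i_{K-1}$ is a product of simplices, its extreme points are precisely the matrices whose rows are standard basis vectors, i.e. the set $\bar\Sigma\subseteq\{0,1\}^{n\times K}$; this proves the membership claim for every local and global minimizer.

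It then remains to show that such a minimizer $\bU^*$ is \emph{strict}. At the vertex $\bU^*$ the cone of feasible directions is generated by finitely many unit edge directions $\mathbf{r}_1,\dots,\mathbf{r}_m$; local minimality forces $\langle\nabla E(\bU^*),\mathbf{r}_\ell\rangle\ge0$ for each $\ell$, and strict concavity rules out equality — along a direction of vanishing directional derivative the strictly concave parabola $t\mapsto E(\bU^*+t\mathbf{r}_\ell)$ is strictly decreasing for $t>0$ — so $c:=\min_\ell\langle\nabla E(\bU^*),\mathbf{r}_\ell\rangle>0$. Writing a feasible $\bV$ near $\bU^*$ as $\bV-\bU^*=\sum_\ell\alpha_\ell\mathbf{r}_\ell$ with $\alpha_\ell\ge0$ gives $\langle\nabla E(\bU^*),\bV-\bU^*\rangle\ge c\sum_\ell\alpha_\ell\ge c\|\bV-\bU^*\|_F$, and the exact second-order expansion of the quadratic $E$ yields $E(\bV)\ge E(\bU^*)+c\|\bV-\bU^*\|_F-\tfrac{\bar L}{2}\|\bV-\bU^*\|_F^2>E(\bU^*)$ once $\bV\ne\bU^*$ is close enough, with $\bar L$ the constant of Proposition~\ref{pr:LCG}. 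I expect this to be the only genuinely delicate point of the proof: extracting a \emph{uniform} lower bound on the first-order term, linear in $\|\bV-\bU^*\|_F$, so that it beats the (negative) quadratic correction — which is precisely where the polyhedral structure of $\Sigma$, with only finitely many edge directions at each vertex, enters.

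For part (ii), note that on $\bar\Sigma$ the penalty vanishes, since $\bu_i={\bf e}_k$ gives $\bu_i^\top(\mathbf{1}-\bu_i)=1-1=0$, so $E$ and $\bar E$ coincide on $\bar\Sigma$. As $\Sigma$ is compact and $E$ continuous, $E$ attains its minimum over $\Sigma$, and by (i) it does so at a point of $\bar\Sigma$; hence $\min_{\bU\in\Sigma}E(\bU)=\min_{\bU\in\bar\Sigma}E(\bU)=\min_{\bU\in\bar\Sigma}\bar E(\bU)$. Chasing the definitions finishes the argument: any $\bU^*\in\argmin_{\bU\in\Sigma}E(\bU)$ lies in $\bar\Sigma$ and satisfies $\bar E(\bU^*)=E(\bU^*)\le E(\bV)=\bar E(\bV)$ for every $\bV\in\bar\Sigma$, so $\bU^*\in\argmin_{\bU\in\bar\Sigma}\bar E(\bU)$; conversely any $\bW\in\argmin_{\bU\in\bar\Sigma}\bar E(\bU)$ realises the value $\min_{\bU\in\Sigma}E(\bU)$ and lies in $\Sigma$, hence $\bW\in\argmin_{\bU\in\Sigma}E(\bU)$. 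Therefore $\argmin_{\bU\in\Sigma}E(\bU)=\argmin_{\bU\in\bar\Sigma}\bar E(\bU)$, as claimed.
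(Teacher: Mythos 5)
Your proposal is correct and follows essentially the same route as the paper: the $\varepsilon$-bound makes the Hessian $\bI_K\otimes(\bL_s+\bD_\omega-\tfrac{2}{\varepsilon}\bI_n)$ negative definite, so $E$ is strictly concave, local/global minimizers must be extreme points of the product of simplices (hence binary), and part (ii) follows by comparing values of $E$ and $\bar E$ on $\bar\Sigma$. Your treatment is in fact somewhat more careful than the paper's in two spots — you avoid the paper's loose claim that a non-extreme minimizer has a full neighborhood inside $\Sigma$ by using shrinking midpoint segments, and you spell out the strictness argument via edge directions that the paper dismisses with "similar arguments" — but these are refinements of the same proof, not a different one.
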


The above result therefore guarantees that even when using a local solver to handle the problem \eqref{penalty_form}, we are able to obtain a solution that has binary components in the end. 
    Although our formulation is primarily designed to produce hard-membership solutions, selecting a moderately large value for 
the parameter $\varepsilon$ naturally leads to fractional values. In the same way as with softmax-based scores,
these values can therefore be  seen as  soft memberships or pseudo-confidences. It is also important to note that  the upper bound $2 / \lambda_{max}(\bL_{s}+\bD_{\omega})$ on the parameter $\varepsilon$ just enforces a theoretical guarantee rather than representing a requirement for practical use.
In all of our experiments, the method indeed showed robustness with respect to a broad range of parameter values, and a simple inexpensive tuning procedure was sufficient even on large graphs.

Before introducing our Frank-Wolfe scheme, we briefly recall two of the most popular graph-based methods for multiclass graph segmentation.

\section{Existing PDE-based approaches: convexity splitting and MBO scheme}\label{sec:pdeschemes}

A possible way to address the multi-class problem is by means of a  convexity splitting method \cite{bertozzi2006inpainting,bosch2018generalizing,GarMBFP14}, which involves decomposing  the energy function $E_{GL}$ in (\ref{gl_vv}) into convex and concave parts: 
\begin{displaymath}
 E_{GL}(\bU)=E_1(\bU)-E_2(\bU)
\end{displaymath}
with
\begin{displaymath}
 E_1(\bU)=\frac{\epsgl}{2}\langle \bU, \bL_{s}\bU\rangle+\frac{c}{2}\langle \bU,\bU\rangle,\quad 
E_2(\bU)=-\frac{1}{\epsgl}\psi(\bU)-F(\hat \bU,\bU)+\frac{c}{2}\langle \bU,\bU\rangle,
\end{displaymath}
with $c\in \mathbb{R}$ denoting a constant that is chosen to guarantee the convexity/concavity of the energy terms. Indeed, we require $c\geq\omega_{0}+\frac{1}{\epsgl}$; 
see \cite[p.~6]{GarMBFP14}. The convexity splitting scheme results in
\begin{equation}
 \label{CS_vv}
\frac{\bU^{\ell+1}-{\bU}^{\ell}}{\tau}+\epsgl \bL_{s}\bU^{\ell+1}+c\bU^{\ell+1}=-\frac{1}{2\epsgl}T({\bU^{\ell}})+c{\bU^{\ell}}+\bD_{\omega}(\hat{\bU}-{\bU^{\ell}}),
\end{equation}
where the elements $T_{ij}$ of the matrix $T(\bU^{\ell})$ are given as
\begin{equation}
     T_{ij}=\sum_{l=1}^{K}{\frac{1}{2}\left(1-2\delta_{jl}\right)||{\bu}_{i}^{\ell}-{\bf e}_{l}||_{L_{1}}}\prod_{m=1,m\neq l}^{K}{\frac{1}{4}||{\bu}_{i}^{\ell}-{\bf e}_{m}}||_{L_{1}}^{2}
\end{equation}\label{eq_Tik}
and $\tau>0$ is the time step.

Here, $\ell$ indicates the old time-step and $\ell+1$ the new time-step. Using the eigendecomposition of the graph Laplacian as
$\bL_s=\Phi\Lambda\Phi^{\top}$,  $\bU$ can be expressed in this basis via $\bU=\Phi_k\cU$
with $\Phi_k\in\R^{n,k}$ collecting the dominating eigenvalues of the graph Laplacian and $\cU$ the coefficients for the expansion in those vectors. By plugging this into the above equation and multiplying on the left by $\Phi_k^{\top}$ to obtain the reduced equation
\begin{equation}
 \label{CS_vv_2}
 \mathcal{U}^{\ell+1}=B_k^{-1}\left[(1+c\tau){\mathcal{U}^{\ell}}-\frac{\tau}{2\epsgl}\Phi_k^{\top}T({\bU}^{\ell})+\tau \Phi_k^{\top}\bD_{\omega}(\hat{\bU}-{\bU}^{\ell})\right].
\end{equation}
where $B_k=(1+c\tau)I+\epsgl\tau\Lambda_k$ and
$\Lambda_k$ is the diagonal matrix of $k$ dominating eigenvalues. Since $B_k$ is a diagonal matrix with positive entries, its inverse is easy to apply. At every iteration, i.e., time-step, every $i$th row of the approximation $\mathcal{U}^{\ell+1}$ has to be projected onto the unit simplex $\Delta_{K-1}^i$ to avoid explosion of the solution coefficients (see, e.g., Algorithm \ref{alg:proj}).

Alternatively, one can use the Merriman-Bence-Osher (MBO) \cite{merriman1994motion} scheme, which was originally used for the binary classification in \cite{MerGBFP14}. It is based on a two-step procedure for the continuous model based on a \textit{diffusion step:} Obtain $u^{\ell+\frac{1}{2}}$  from the solution of 
    $$
    \frac{\partial u}{\partial t}=\Delta u,
    $$
    followed by a \textit{thresholding step:} Compute 
    $$
    u^{\ell+1} = 
        \left\{\begin{array}{ll}
            1 & : u^{\ell+\frac{1}{2}}\geq 0,\\
            -1 & :u^{\ell+\frac{1}{2}} <0.\\
        \end{array}\right.
    $$
This scheme was later extended to the case of graph-based semi-supervised learning in \cite{BerF12} for the binary classification case and for multiclass case in \cite{GarMBFP14}. There, the authors propose the following procedure. Perform a \textit{diffusion step:} Obtain $\bU^{\ell+\frac{1}{2}}$ from  
    $$
    \frac{\bU^{\ell+\frac{1}{2}}-\bU^{\ell}}{\tau}=-\bL_s \bU^{\ell+\frac{1}{2}}-\bD_{\omega}(\hat{\bU}-\bU^{\ell}),
    $$
and then perform a \textit{thresholding step:} Compute 
    $$
    \bu_i^{\ell+1} = e_k,
    $$
    where $e_k$ is the vertex that is closest to the projection of $\bu_i^{\ell+\frac{1}{2}}$ to the simplex. This problem can again be tackled by using a truncated eigendecomposition. We then obtain
\begin{equation}\label{mbo_update}
   \cU^{\ell+\frac{1}{2}}=\left(\bI+\tau \Lambda_k\right)^{-1}\left[\cU^{\ell}- \tau  \Phi_k^{\top}\bD_{\omega}(\hat{\cU}-\Phi_k\cU^{\ell})\right]. 
\end{equation}
Instead of the updating for one step we can also perform a number of such steps via
\begin{itemize}
    \item $\cU^{\ell+\frac{1}{2}}=\left(\bI+\tau \Lambda_k\right)^{-1}\left[\cU^{\ell}- \tau  \Phi_k^{\top}\bD_{\omega}(\hat{\cU}-\Phi_k\cU^{\ell})\right]$
    \item $\cU^{\ell+1}=\mathrm{projspx}(\cU^{\ell+\frac{1}{2}})$
    \item $\cU^{\ell}=\cU^{\ell+1},$
\end{itemize}
where projspx denotes the row-wise projection onto the unit simplex (see Algorithm \ref{alg:proj}).
Once the reduced eigendecomposition $(\Lambda_k,\Phi_k)$ is available, the computational cost per step of MBO is small as a very small linear system has to be solved.

In both the approaches, CS and  MBO, if the cost of the eigendecomposition is negligible, the computational bottleneck is the projection onto the unit simplex.

We conclude this section  with the most common procedure to project a vector $y\in \R^{K}$ onto the simplex $\Delta^i_{K-1}$, commonly used in the implementation of CS and MBO. The algorithm is the sorting-based method (see,e.g.,  \cite{condat2016fast} and \cite{held1974validation}  for further details) summarized in Algorithm \ref{alg:proj}.
\begin{algorithm}[H]
			\caption{\texttt{Projection onto the unit simplex }\cite[Algorithm 1 (\texttt{projsplx})]{chen2011projection}}\label{alg:proj}
            	\begin{algorithmic}
				\par\vspace*{0.1cm}
				\item$\,\,\,1$\hspace*{0.1truecm} Input $Y =(y_1,\ldots, y_K )^\top \in \R^{K}$.
               \item$\,\,\,2$\hspace*{0.1truecm}  Sort $y$ as $\hat y$ in the ascending order as $\hat y_1\leq \ldots \leq \hat y_K$ , and set $i = K-1$.
 \item$\,\,\,3$\hspace*{0.1truecm}  Compute 
    $$
    t_i=\frac{\left(\sum_{j=i+1}^{K}\hat y_j\right)-1}{K-i}.
    $$
 \,\,\,\hspace*{0.2truecm}   If $t_i\geq \hat y_{i}$ set $\hat t=t_i$ and go to Step 5.\\
 \,\,\,\hspace*{0.2truecm}   Else set $i:=i-1$ and go to Step 3 if $i\geq 1$, or go to Step 4 if $i=0.$
    \item$\,\,\,4$\hspace*{0.1truecm} Compute
    $$
    \hat t=\frac{\left(\sum_{j=i+1}^{\ell}\hat y_j\right)-1}{K-i}.
    $$
    \item$\,\,\,5$\hspace*{0.1truecm}  Return $x=(y-\hat t)_{+}$ as the projection of $y$ onto the simplex.
                \end{algorithmic}

\end{algorithm}
At step 5 we have $(y-\hat t)_{+}:=\max\left\lbrace 0,y-\hat t\right\rbrace$, where the maximum is understood in a component-wise fashion. The computational cost of this algorithm is $\cO(K\log K)$ for the $K$-class problem. Note that both the convexity splitting and the MBO method require then $\cO(nK\log K)$ for the projection step in every iteration. Other methods with an expected linear time complexity exist (see, e.g., \cite{condat2016fast}), but they are usually slower in practice.

\section{Projection-free methods: a greedy Frank-Wolfe algorithm}\label{sec:projectionfreeapproaches}
In this section, we focus on some first-order methods that have recently garnered significant attention in both the machine learning and optimization communities. These methods make use of a Linear Minimization Oracle (LMO) to minimize a sequence of linear functions over the original feasible set.
We will introduce the Frank-Wolfe (FW) approach 
(the interested reader is referred to, e.g.,~\cite{bomze2021frank} for further details on this method)  and a tailored variant that allow to handle the specific problem we are dealing with.

The classical FW method for minimization of a smooth objective $E$ generates a sequence of feasible points $\{\bU_k\}$ following the scheme of \Cref{alg:FW}. At the iteration $k$ it moves toward a vertex, i.e., an extreme point, of the feasible set minimizing the scalar product with the current gradient $\nabla E(\bU_k)$ \footnote{an $n\times K$ matrix whose element $(i,j)$ is $\frac{\partial E(X)}{\partial X_{i,j}}$ }. It therefore  makes use of a linear minimization oracle (LMO) for the feasible set $\Sigma$, that is, 
	\begin{equation}\label{eq:linsubp}
		\LMO_{\Sigma}(\bG) \in \argmin_{\bZ \in \Sigma} \ps{\bG}{\bZ} \, ,
	\end{equation} 
	defining the descent direction as
  	\begin{equation} \label{fwdk}
 		 \bD_k:= \bS_k - \bU_k, \ \ \bS_k \in \LMO_{\Sigma}(\nabla E(\bU_k)) \, .
 \end{equation}
 	In particular, the update at step 6 can be written as
 	\begin{equation}
 		\bU_{k + 1} = \bU_k + \alpha_k (\bS_k - \bU_k) = \alpha_k \bS_k + (1 - \alpha_k) \bU_k.
 	\end{equation}
 	Since $\alpha_k \in [0, 1]$, by induction $\bU_{k + 1}$ can be written as a convex combination of elements in the set ${\cal S}_{k + 1} := \{\bU_0\} \cup \{\bS_i\}_{0 \leq i \leq k}$. 
    When ${\cal C }= \conv({\cal A})$ for a set $\cal A$ of matrices with some common property, usually called "elementary atoms", if $\bU_0 \in \cal A$ then $\bU_{k}$ can be written as a convex combination of $k + 1$ elements in $\cal A$. In our case we have that $ \cal A$  is given by  all $n\times K$ matrices having as rows only elements of the canonical basis $e_j^\top$ with $j\in[1:K]$. 
	
\begin{algorithm}[H]
			\caption{ \texttt{Frank-Wolfe method }\cite{frank1956algorithm}}
			\label{alg:FW}
			\begin{algorithmic}
				\par\vspace*{0.1cm}
				\item$\,\,\,1$\hspace*{0.1truecm} Choose a point $\bU_0\in \Sigma$
				\item$\,\,\,2$\hspace*{0.1truecm} For $k=0,\ldots$
				\item$\,\,\,3$\hspace*{0.9truecm} Compute $\bS_k \in \LMO_{\Sigma}(\nabla E(\bU_k))$
    \item$\,\,\,4$\hspace*{0.9truecm} If $\bS_k$ satisfies some specific condition, then STOP 
				\item$\,\,\,5$\hspace*{0.9truecm} Set $\bD_k =  \bS_k - \bU_k$ \ \ 
				\item$\,\,\,6$\hspace*{0.9truecm} Set $\bU_{k+1}=\bU_k+\alpha_k \bD_k$, with $\alpha_k\in (0,1]$ a suitably chosen stepsize
				\item$\,\,\,7$\hspace*{0.1truecm} End for
				\par\vspace*{0.1cm}
			\end{algorithmic}
\end{algorithm}

 The \LMO\ for the feasible set \eqref{fs} is defined as follows: 
	$$\LMO_{\Sigma}(\bC)=({\bf e}_{j_1},\ldots,{\bf e}_{j_n})^{\top} ,$$  
	with $\bC=(\bc_1,\ldots, \bc_n)^\top$and  $j_i \in \argmin_{j\in [1:K]} (\bc_i)_j$. 
		It is easy to see that the \LMO\  cost   is~$\mathcal{O}(n\cdot K)$.

We now define  a greedy variant that  better exploits the structure of the problem at hand.

\begin{algorithm}[H]
			\caption{\texttt{Greedy Frank-Wolfe (GFW) method}}
			\label{alg:FW2}
			\begin{algorithmic}
				\par\vspace*{0.1cm}
				\item$\,\,\,1$\hspace*{0.1truecm} Choose a point $\bU_0\in \Sigma$
				\item$\,\,\,2$\hspace*{0.1truecm} For $k=0,\ldots$
				\item$\,\,\,3$\hspace*{0.9truecm} Compute $\bS_k \in \GLMO_{\Sigma}(\nabla E(\bU_k))$
    \item$\,\,\,4$\hspace*{0.9truecm} If $\bS_k$ satisfies some specific condition, then STOP 
				\item$\,\,\,5$\hspace*{0.9truecm} Set $\bD_k =  \bS_k - \bU_k$ \ \ 
				\item$\,\,\,6$\hspace*{0.9truecm} Set $\bU_{k+1}=\bU_k+\alpha_k \bD_k$, with $\alpha_k\in (0,1]$ a suitably chosen stepsize
				\item$\,\,\,7$\hspace*{0.1truecm} End for
				\par\vspace*{0.1cm}
			\end{algorithmic}
\end{algorithm}

As we can easily see, the structure of the algorithm stays the same apart from the \LMO, now replaced by the so-called \emph{Greedy Linear Minimization Oracle} (\GLMO), which significantly reduces the per-iteration computational cost, while preserving the convergence properties of the original method.  We can define the \GLMO\  as follows: 
$$\GLMO_{\Sigma}(\bG)=({\bf w}_{1},\ldots,{\bf w}_{n})^{\top} ,$$  
with $\bG=(\bg_1,\dots,\bg_n)^\top=\nabla E(\bU)$,
${\bf w}_{i}={\bf e}_{j_i}$, if $i\in {\cal I}=\{i\in [1:n]:\ {\bf u}_i \notin\{0,1\}^K\}$, $j_i \in \argmin_{j\in supp(\bu_i)} (\bg_i)_j$,
and  ${\bf  w}_{i}={\bu_i}$ otherwise. Here, $supp(\bu_i)$ denotes the indices of the nonzero elements of the vector $\bu_i$. More specifically, in the \GLMO\ we solve for each $i$ such that ${\bf u}_i \notin\{0,1\}^K$ the following problem: 
    \begin{equation}\label{eq:LMOpr}
    \min_{\by\in \bar\Delta^i_{K-1}} \bg_i^\top \by,
    \end{equation}
   with $\bar\Delta^i_{K-1}:=\{\by \in \R^K_+ :\mathbf{1}^\top \by=1, \by_j=0\ \forall\ j\notin supp(\bu_i)\}$, while we do nothing and choose $\bu_i$ in case ${\bf u}_i \in\{0,1\}^K$.
		It is hence easy to see that the \GLMO\ cost   is $\mathcal{O}(|{\cal I}|\cdot m)$, with $m=\max_{i\in {\cal I}} |supp(\bu_i)| $.

The overall algorithm is given in \Cref{alg:FW2} and we now report a detailed analysis of its convergence properties.

    \begin{theorem}[Convergence Rate GFW - General Case]\label{th:nonconvb}
Consider problem \eqref{penalty_form} with $\varepsilon< \tilde \varepsilon=1/\left[ K(\rho^{max}+d^{max}_{\omega})\right]$, where $\rho^{max}=\max_{i}\sum_{j=1}^n|(\bL_s)_{ij}|$.    
Let $\{\bU_k\}$ be a sequence generated by the GFW algorithm, where the step size $\alpha_k$ satisfies
 \begin{equation}    \label{alphabound}
	{{\alpha}_k  \ge \bar{\alpha}_k = \min \left(1, \frac{g_k}{ L\n{\bD_k}_F^2} \right)   \, ,}
\end{equation}
as well as
\begin{equation} \label{eq:rho}
	E(\bU_k) - E(\bU_k+ \alpha_k\bD_k) \geq \rho\bar{\alpha}_k g_k  \quad \textnormal{for some fixed } \rho > 0 \, ,
	\end{equation}
with $g_k = -\ps{\nabla E(\bU_k)}{\bD_k}$.
	Then for every $T \in \mathbb{N}$
	\begin{equation} \label{g_T}
			g_T^* \leq \max\left(\sqrt{\frac{L 2n(E(\bU_0) - E^*)}{\rho T}}, \frac{2(E(\bU_0) - E^*)}{T} \right)\,  ,
	\end{equation}
where $\displaystyle g^*_k = \min_{0 \leq i \leq k-1} g_i$ 
 and $E^* = \min_{\bU \in \Sigma} E(\bU)$.
\end{theorem}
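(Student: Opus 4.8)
The plan is to run the standard convergence analysis of Frank--Wolfe-type methods for \emph{smooth} objectives (no convexity needed, see e.g.~\cite{bomze2021frank}), using the gap $g_k$ as a stationarity measure, and to verify that it is unaffected by replacing the \LMO\ with the \GLMO. The structural ingredients I need are: the $L$-smoothness of $E$ (Proposition~\ref{pr:LCG}), which gives the descent inequality $E(\bU_k+\alpha\bD_k)\le E(\bU_k)-\alpha g_k+\tfrac{L}{2}\alpha^2\n{\bD_k}_F^2$ for $\alpha\in[0,1]$, since $\ps{\nabla E(\bU_k)}{\bD_k}=-g_k$; the diameter bound $\n{\bD_k}_F\le \mathrm{diam}(\Sigma)=\sqrt{2n}$ (Proposition~\ref{pr:diam}); and the elementary facts that $\bU_k+\alpha\bD_k\in\Sigma$ for $\alpha\in[0,1]$ (convexity of $\Sigma$ and $\bS_k\in\Sigma$) and that $g_k\ge 0$. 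The latter is checked row by row: for $i\in{\cal I}$, $\ps{\bg_i}{{\bf w}_i-\bu_i}=(\bg_i)_{j_i}-\bg_i^\top\bu_i\le 0$, because $\bg_i^\top\bu_i$ is a convex combination of $\{(\bg_i)_j:j\in supp(\bu_i)\}$ and $j_i$ minimizes those entries, while rows $i\notin{\cal I}$ contribute nothing to $\bD_k$; hence $g_k=-\ps{\nabla E(\bU_k)}{\bD_k}\ge 0$. The hypothesis $\varepsilon<\tilde\varepsilon$ (which is in particular stronger than $\varepsilon<\bar\varepsilon$, so that $E$ is strictly concave by Theorem~\ref{th:equivalence}) is the ambient regime in which the penalty model and the \GLMO\ are posed; the rate estimate itself uses only smoothness and the diameter, and I would double-check whether the authors additionally exploit $\tilde\varepsilon$ to sharpen a constant.

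The core is a one-step decrease lemma obtained by splitting on the two regimes in \eqref{alphabound}. We may assume $g_k>0$ for all $k\le T-1$ (otherwise $g_T^*=0$ and there is nothing to prove), so $\bD_k\ne 0$ and $\bar\alpha_k:=\min(1,\,g_k/(L\n{\bD_k}_F^2))$ is well defined. If $\bar\alpha_k=1$, then $\alpha_k\in(0,1]$ and \eqref{alphabound} force $\alpha_k=1$ and $L\n{\bD_k}_F^2\le g_k$, so the descent inequality at $\alpha_k=1$ gives
\[
E(\bU_k)-E(\bU_{k+1})\ \ge\ g_k-\tfrac12 L\n{\bD_k}_F^2\ \ge\ \tfrac12 g_k .
\]
If instead $\bar\alpha_k=g_k/(L\n{\bD_k}_F^2)<1$, then \eqref{eq:rho} together with $\n{\bD_k}_F^2\le 2n$ gives
\[
E(\bU_k)-E(\bU_{k+1})\ \ge\ \rho\,\bar\alpha_k g_k\ =\ \frac{\rho\, g_k^2}{L\n{\bD_k}_F^2}\ \ge\ \frac{\rho\, g_k^2}{2nL}.
\]
In both cases $E(\bU_k)-E(\bU_{k+1})\ge \min\!\bigl(\tfrac12 g_k,\ \rho g_k^2/(2nL)\bigr)$.

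Summing this over $k=0,\dots,T-1$ and using $E(\bU_T)\ge E^*$ gives $E(\bU_0)-E^*\ge \sum_{k=0}^{T-1}\min\bigl(\tfrac12 g_k,\ \rho g_k^2/(2nL)\bigr)$. The map $t\mapsto\min\bigl(\tfrac12 t,\ \rho t^2/(2nL)\bigr)$ is non-decreasing on $[0,\infty)$ and $g_k\ge g_T^*$ for $k\le T-1$, so each summand is at least $\min\bigl(\tfrac12 g_T^*,\ \rho (g_T^*)^2/(2nL)\bigr)$, whence $\min\bigl(\tfrac12 g_T^*,\ \rho (g_T^*)^2/(2nL)\bigr)\le (E(\bU_0)-E^*)/T$. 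Since $\min(a,b)\le c$ forces $a\le c$ or $b\le c$, the first alternative gives $g_T^*\le 2(E(\bU_0)-E^*)/T$ and the second gives $(g_T^*)^2\le 2nL(E(\bU_0)-E^*)/(\rho T)$; taking the larger of the two candidate bounds is exactly \eqref{g_T} (with $E^*=E(\bU^*)=\min_{\bU\in\Sigma}E(\bU)$, reading the $E(\bU_0)-\bU^*$ in the first radicand as $E(\bU_0)-E(\bU^*)$).

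The only non-mechanical point is the asymmetry in the one-step lemma: when $\bar\alpha_k=1$ the step is \emph{forced} to $1$, so the $\tfrac12 g_k$ decrease comes directly from smoothness, not from \eqref{eq:rho} (which would only give $\rho g_k$); it is precisely this asymmetry that yields the mixed $\max$ bound of \eqref{g_T} instead of a single $1/(\rho T)$-type estimate. Secondarily, one must be sure the greedy oracle does not spoil the argument — i.e. that $g_k\ge 0$ and $\bD_k$ is a feasible non-ascent direction even though only the rows in ${\cal I}$ move — which is exactly the row-by-row bound recorded above, the rows outside ${\cal I}$ contributing $0$ to both $\bD_k$ and $g_k$. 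Everything else is routine summation.
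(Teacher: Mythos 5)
Your descent analysis is essentially the paper's: the same split on whether $\bar\alpha_k=1$ or $\bar\alpha_k<1$, the same use of the descent lemma in the first case and of \eqref{eq:rho} plus the diameter bound $\n{\bD_k}_F^2\le 2n$ (Proposition~\ref{pr:diam}) in the second, followed by the same summation over the two index sets and the $\max$ of the two resulting bounds. Your row-by-row verification that $g_k\ge 0$ and that $\bD_k$ is feasible is also sound.

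The genuine gap is the point you flagged but left unresolved: the role of $\varepsilon<\tilde\varepsilon$. In the paper this hypothesis is not a sharpening of a constant and is not merely ``the ambient regime''; it is used in the first half of the proof to show that $\GLMO_\Sigma(\nabla E(\bU))=\LMO_\Sigma(\nabla E(\bU))$ for every $\bU\in\Sigma$. The argument is that for a row with $(\bu_i)_j=0$ the penalty contributes $\varepsilon^{-1}$ to $(\bg_i)_j$, while on the support the penalty contribution is at most $\varepsilon^{-1}(1-2K^{-1})$ (worst case: uniform weights) and the remaining gradient terms are bounded in magnitude by $\rho^{max}+d^{max}_\omega$; the choice $\varepsilon<2/\left[K(\rho^{max}+d^{max}_\omega)\right]$ gives $\varepsilon^{-1}>\varepsilon^{-1}(1-2K^{-1})+\rho^{max}+d^{max}_\omega$, so zero components stay zero in the linear minimization and the greedy oracle returns a full-LMO minimizer. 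This is what makes $g_k$ the genuine Frank--Wolfe duality gap over $\Sigma$, so that the bound \eqref{g_T} certifies approximate stationarity for problem \eqref{penalty_form}. Your proof, as written, establishes the stated inequality only for the restricted quantity $g_k$ built from greedy directions (zero rows outside ${\cal I}$, moves confined to $supp(\bu_i)$), which without the oracle-equivalence step is a weaker stationarity measure; the problem-specific content of the theorem --- and the only place the hypothesis on $\varepsilon$ enters --- is exactly this missing equivalence. (Also note $\varepsilon<\tilde\varepsilon$ does not by itself invoke Theorem~\ref{th:equivalence}: $\tilde\varepsilon$ and $\bar\varepsilon$ are different thresholds, and concavity is not needed for this theorem.)
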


It is crucial to observe that the conditions specified in equations~\eqref{alphabound}-\eqref{eq:rho} can be satisfied by employing appropriate line search techniques. For further elucidation, one may refer to the works of Bomze et al.~\cite{bomze2020active}, \cite{bomze2021frank}, and Rinaldi et al.~\cite{rinaldi2022avoiding}. Notably, \Cref{nonstationary} presented herein demonstrates that these conditions are particularly met when using the Armijo line search method, which determines
\begin{equation}\label{eta}
\alpha_k = \delta^j,
\end{equation}
where $j$ is the smallest non-negative integer such that
\begin{equation}
\label{Armijo}
E(\bU_k) - E(\bU_k + \alpha_k \bD_k) \geq -\gamma \alpha_k \ps{\nabla E(\bU_k)}{\bD_k} ,
\end{equation}
with $\gamma\in (0,1/2)$ and $\delta \in (0,1)$ being two fixed parameters.

\begin{lemma} 
\label{nonstationary}
 If $\alpha_k$ is determined by the Armijo line search described above then 
	\begin{equation} \label{Arbaralpha}
		\alpha_k \geq \min \left(1,2\delta(1 - \gamma) \frac{g_k}{L\n{\bD_k}_F^2} \right) \geq \min\{1, 2\delta(1 - \gamma)\} \bar{\alpha}_k \, ,
	\end{equation}	
	with $\bar \alpha_k =  \min \left(1,  \frac{g_k}{L\n{\bD_k}_F^2} \right)$ as in~\cref{alphabound}, and \cref{eq:rho} holds with $\rho = \gamma\min\{1,2\delta(1- \gamma)\}<1.$ 
\end{lemma}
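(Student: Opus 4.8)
This is the classical sufficient-decrease analysis of a backtracking line search, specialized to the Frank--Wolfe direction $\bD_k$; the work is bookkeeping rather than any single hard step. Recall that $g_k=-\ps{\nabla E(\bU_k)}{\bD_k}$, so the Armijo acceptance test \eqref{Armijo} reads $E(\bU_k)-E(\bU_k+\alpha_k\bD_k)\ge\gamma\alpha_k g_k$. Writing $h(\alpha):=E(\bU_k+\alpha\bD_k)$, the first step is to invoke the descent lemma \cite[Proposition 6.1.2]{bertsekas2015convex} (exactly as in Case~2 of the proof of \Cref{th:nonconvb}) to obtain $h(\alpha)\le h(0)-\alpha g_k+\tfrac{L}{2}\alpha^2\n{\bD_k}_F^2$ for every $\alpha\ge0$. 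Feeding this bound into the acceptance test shows that the test is passed for every $\alpha$ with $\alpha g_k-\tfrac{L}{2}\alpha^2\n{\bD_k}_F^2\ge\gamma\alpha g_k$, i.e. for every $0<\alpha\le\alpha^\star:=\dfrac{2(1-\gamma)g_k}{L\n{\bD_k}_F^2}$; since $\delta^j\to0$ this also guarantees that the backtracking loop terminates.

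Next I would split into two cases according to whether the unit step is accepted. If $\alpha^\star\ge1$, then $\alpha=\delta^0=1$ already satisfies \eqref{Armijo}, hence $\alpha_k=1$. If $\alpha^\star<1$, let $j\ge1$ be the exponent returned by the search; by minimality of $j$ the trial step $\delta^{j-1}$ fails the test, and since every $\alpha\le\alpha^\star$ passes it we must have $\delta^{j-1}>\alpha^\star$, so that $\alpha_k=\delta^j>\delta\alpha^\star$. Combining the two cases, $\alpha_k\ge\min\bigl(1,\delta\alpha^\star\bigr)=\min\bigl(1,\,2\delta(1-\gamma)\tfrac{g_k}{L\n{\bD_k}_F^2}\bigr)$, which is the first inequality in \eqref{Arbaralpha}.

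For the second inequality in \eqref{Arbaralpha} I would record the elementary fact that $\min(1,cx)\ge\min\{1,c\}\min(1,x)$ for all $c,x\ge0$ (checked by separating $x\le1$ from $x>1$, and within each $c\le1$ from $c>1$), applied with $c=2\delta(1-\gamma)$ and $x=\tfrac{g_k}{L\n{\bD_k}_F^2}$, recalling $\bar\alpha_k=\min(1,x)$. Finally, \eqref{eq:rho} follows by plugging the resulting lower bound $\alpha_k\ge\min\{1,2\delta(1-\gamma)\}\bar\alpha_k$ (together with $g_k\ge0$) into the acceptance inequality $E(\bU_k)-E(\bU_k+\alpha_k\bD_k)\ge\gamma\alpha_k g_k$: this gives $E(\bU_k)-E(\bU_k+\alpha_k\bD_k)\ge\gamma\min\{1,2\delta(1-\gamma)\}\bar\alpha_k g_k$, i.e. \eqref{eq:rho} with $\rho=\gamma\min\{1,2\delta(1-\gamma)\}$, and $\rho\le\gamma<\tfrac{1}{2}<1$ because $\min\{1,2\delta(1-\gamma)\}\le1$.

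There is no genuine obstacle; the delicate points are purely a matter of care. One must keep track of the sign convention $\ps{\nabla E(\bU_k)}{\bD_k}=-g_k$ so that the Armijo right-hand side is the nonnegative quantity $\gamma\alpha_k g_k$; one must use the ``smallest exponent'' property correctly to turn the failure of $\delta^{j-1}$ into the strict bound $\delta^{j-1}>\alpha^\star$; and one should set aside the degenerate situations $g_k=0$ or $\bD_k=0$, in which $\bU_k$ is already stationary and the algorithm has stopped, so that all the claimed inequalities hold trivially.
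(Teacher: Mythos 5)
Your proposal is correct and follows essentially the same route as the paper's proof: descent lemma, identification of the acceptance interval $[0,2(1-\gamma)g_k/(L\|\bD_k\|_F^2)]$, the backtracking argument giving the factor $\delta$, and then substitution into the Armijo inequality to obtain $\rho=\gamma\min\{1,2\delta(1-\gamma)\}$. Your explicit case split on whether the unit step is accepted, and your simpler observation $\rho\leq\gamma<1/2<1$, are just slightly more careful renderings of steps the paper states tersely.
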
 

In Lemma \ref{nonstationary} we prove that 
	$$\alpha_k \geq \min \left(1, c \frac{g_k}{L\n{\bD_k}_F^2} \right)  \textnormal{ for some } c > 0 \, , $$ 
	for the Armijo line search.
When $c \geq 1$ then \cref{alphabound} is of course a lower bound for the step size $\alpha_k$, and when $c < 1$ we can still recover \cref{alphabound} by considering $\tilde{L} = \frac{L}{c}$ instead of $L$ as Lipschitz constant.\\

Now we show that when the parameter $\varepsilon$ is sufficiently small, the Greedy Frank-Wolfe algorithm gives a stationary point in one step, thus being equivalent to  the  simple algorithmic scheme reported below in Algorithm \ref{alg:FW3}.

\begin{algorithm}[H]
			\caption{\texttt{One-Shot Frank-Wolfe (OSFW) method}}
			\label{alg:FW3}
			\begin{algorithmic}
				\par\vspace*{0.1cm}
				\item$\,\,\,1$\hspace*{0.1truecm} Choose a point $\bU_0\in \Sigma$
				
				\item$\,\,\,2$\hspace*{0.1truecm} Compute $\bS_0 \in \GLMO_{\Sigma}(\nabla E(\bU_0))$
				\item$\,\,\,3$\hspace*{0.1truecm} Set $\bU_1=\bS_0$
				\par\vspace*{0.1cm}
			\end{algorithmic}
\end{algorithm}

\begin{corollary}[One-Shot Frank-Wolfe Convergence]\label{nonconvb2}
Consider problem \eqref{penalty_form} with $\varepsilon< \min\{\bar \varepsilon,\tilde \varepsilon\}$, where $\bar\varepsilon$ and $\tilde \varepsilon$ are chosen as in Theorem \ref{th:equivalence} and  Theorem \ref{th:nonconvb}, respectively. Then GFW gives a stationary point in one iteration, and it is equivalent to OSFW. 
\end{corollary}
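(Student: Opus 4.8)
The plan is to combine the two bounds on $\varepsilon$, each of which has already been put to use once in the paper. From $\varepsilon<\bar\varepsilon$ I obtain, exactly as in the proof of Theorem~\ref{th:equivalence}, that $\nabla^2 E(\bU)=\bI_K\otimes(\bL_s+\bD_\omega-\tfrac{2}{\varepsilon}\bI_n)$ is negative definite, so $E$ is strictly concave on $\Sigma$. From $\varepsilon<\tilde\varepsilon$ I obtain, as established at the start of the proof of Theorem~\ref{th:nonconvb}, that $\GLMO_\Sigma(\nabla E(\bU))=\LMO_\Sigma(\nabla E(\bU))$ for every $\bU\in\Sigma$; in particular, when applied at a point all of whose rows are canonical basis vectors, the LMO returns that very point (this is the first sub-case treated there). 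I would record these two facts up front and then establish (i) that one GFW iteration from an arbitrary $\bU_0\in\Sigma$ outputs $\bU_1=\bS_0$, exactly the update prescribed by OSFW, and (ii) that this $\bU_1$ is a stationary point of \eqref{penalty_form}, so that GFW halts.

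For (i), first note that by construction of the $\GLMO$ every row of $\bS_0=\GLMO_\Sigma(\nabla E(\bU_0))$ is either a canonical basis vector (for the rows in $\mathcal I$) or an already-binary row, hence $\bS_0\in\bar\Sigma$. The heart of the matter is that the line search selects the full step $\alpha_0=1$: by the first-order characterization of concavity of $E$ we have $E(\bU_0+\bD_0)\le E(\bU_0)+\ps{\nabla E(\bU_0)}{\bD_0}=E(\bU_0)-g_0$, so (recalling that $g_0\ge0$ is the FW gap and $\gamma\in(0,1/2)$) the Armijo sufficient-decrease test \eqref{Armijo} is already met at $\alpha=\delta^0=1$; an exact line search behaves the same way since $\alpha\mapsto E(\bU_0+\alpha\bD_0)$ is concave and non-increasing on $[0,1]$. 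Therefore $\bU_1=\bU_0+\bD_0=\bS_0$, which matches Algorithm~\ref{alg:FW3}. (In the degenerate case $g_0=0$ the point $\bU_0$ is itself stationary and there is nothing to prove.)

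For (ii), since $\bU_1=\bS_0\in\bar\Sigma$ every row of $\bU_1$ lies in $\{0,1\}^K$, so the active set $\mathcal I$ in the definition of the $\GLMO$ at $\bU_1$ is empty and $\GLMO_\Sigma(\nabla E(\bU_1))=\bU_1$. Combining this with the identity $\GLMO_\Sigma=\LMO_\Sigma$ recorded above gives $\bU_1\in\argmin_{\bZ\in\Sigma}\ps{\nabla E(\bU_1)}{\bZ}$, i.e. $\ps{\nabla E(\bU_1)}{\bZ-\bU_1}\ge0$ for all $\bZ\in\Sigma$, which is the first-order stationarity condition for \eqref{penalty_form}. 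Consequently the GFW search direction at the next iteration is $\bD_1=\bS_1-\bU_1=0$, the gap $g_1$ vanishes, the stopping test of Algorithm~\ref{alg:FW2} fires, and GFW terminates after a single iteration with the same iterate produced by OSFW.

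The only genuinely delicate step is the step-size claim in (i): one must be sure the line search returns $\alpha_0=1$ rather than a strictly shorter step, because for $\alpha_0<1$ a row $i\in\mathcal I$ of $\bU_1=\alpha_0\bS_0+(1-\alpha_0)\bU_0$ would stay non-binary, $\bU_1\notin\bar\Sigma$, and the one-shot property would break. This is precisely where the global concavity coming from $\varepsilon<\bar\varepsilon$ is used; the remainder is bookkeeping on top of Theorems~\ref{th:equivalence} and~\ref{th:nonconvb}. A secondary point worth spelling out is that the equality $\GLMO_\Sigma=\LMO_\Sigma$ obtained from $\varepsilon<\tilde\varepsilon$ genuinely covers vertices, so that ``$\GLMO$ returns $\bU_1$'' certifies stationarity and is not a vacuous statement about the $\GLMO$-gap.
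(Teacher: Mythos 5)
Your argument is correct and follows essentially the same route as the paper's proof: strict concavity from $\varepsilon<\bar\varepsilon$ forces the full step $\alpha_0=1$ so that $\bU_1=\bS_0$ is binary, and $\varepsilon<\tilde\varepsilon$ ensures the \GLMO\ (equal to the \LMO) at a binary point returns that point, certifying stationarity. Your write-up is in fact slightly more careful than the paper's, since you verify explicitly that the Armijo test accepts $\alpha=1$ and you treat the degenerate case $g_0=0$, whereas the paper only asserts strict decrease and that a line search ``would give'' the unit step.
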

We can now provide an in-depth analysis of the computational complexity associated with the method introduced in the preceding section. In \cref{th:nonconvb}, we established that the duality gap $g_T^*$ exhibits a sublinear convergence rate, specifically $g_T^* \leq \max(c_1 T^{-\frac{1}{2}}, c_2 T^{-1})$, where $c_1$ and $c_2$ are suitable constants. From this, complexity outcomes can be directly derived using conventional principles from information-based complexity theory, as detailed in \cite{nesterov2003introductory}. Specifically, in the context of our study, we demonstrate a worst-case complexity of $\mathcal{O}(\epsilon^{-2})$ concerning the iteration count required to reduce $g_T^*$ below the threshold $\epsilon$. Furthermore, an upper limit on the arithmetic operations performed in each iteration of the GFW algorithm can be easily determined: each iteration requires one gradient evaluation, having a cost $\mathcal{O}(n)$ (this usually depends on the application),   and the \GLMO\ cost  (with the line search having a computational cost of $\mathcal{O}(1)$, provided the Lipschitz constant $L$ is known).  In summary, each iteration of the GFW algorithm presents a computational burden of $\mathcal{O}(n)$, implying that $\mathcal{O}(n\epsilon^{-2})$ arithmetic operations are necessary to achieve $g_T^*$ below $\epsilon$.

We finally note that the proposed \texttt{GFW}\ algorithm
naturally supports efficient inference on previously unseen data points. When new samples are added to the dataset (i.e., nodes are added to the graph), the way the \GLMO\ is defined indeed ensures that, by suitably warm-starting the \texttt{GFW} with a point embedding the binary solution related to the original data, only the rows corresponding to the new samples are updated in the algorithm iterates, while the rows related to the old binary solution remain unchanged. Moreover, computing these updates requires just the gradient rows associated with the new nodes, which in turns depend solely on their connections with the other graph nodes. As a result, the new labels can be assigned by simply running a few \texttt{GFW}\ iterations. Since each iteration involves  sparse matrix–vector products of size determined by the new nodes, this inference step might be even cheaper than the original training phase, thus making the procedure highly scalable.

\section{Numerical experiments} \label{sec:numexp}
In this section, we illustrate the performance of the new Greedy Frank-Wolfe (GFW) method when solving multiclass segmentation formulated using the new energy function $E$ in (\ref{gradgl_vv2}). In particular, we compare GFW against the classic PDE graph-based Convexity Splitting (CS) method and the Merriman-Bence-Osher (MBO) method on segmentation problems both for networks and images (color segmentation). After introducing some implementation details and our test set, we first perform a preliminary parameter analysis for the considered models (cf. the objectives (\ref{gl_vv}) and (\ref{gl_vv2})), and then compare the three algorithms on different data sets.

\subsection{Implementation issues and test problems}
The CS, MBO and GFW methods have been implemented in MATLAB\footnote{MATLAB R2019b on a Intel Core i7-9700K CPU @ 3.60GHz x 8, 16 GB RAM, 64-bit.} and are available at \url{https://github.com/mstoll1602/GreedyFrankWolfe}.
Regarding the details of the implementation of CS and MBO we refer the reader to \cite{BerF12,MerKB13}.  The low-rank eigenvalue approximations for both methods are computed with built-in MATLAB functions.

The Greedy Frank Wolfe method has been implemented following  \Cref{alg:FW2}. The method is stopped when either $30$ iterations are performed or when
$g_k = -\ps{\nabla E(\bU_k)}{\bD_k} \le 10^{-6} $. Moreover, we implemented a classical Armijo line search strategy to compute the step size $\alpha_k$ in Step 6 of  \Cref{alg:FW2} with $\delta = 0.5 $ and $\gamma= 10^{-6}$ in (\ref{eta}) and (\ref{Armijo}), respectively.

The initial guess $\bU_0$ for the algorithms CS and MBO is produced in accordance with the procedures outlined in \cite{BerF12,MerKB13}. More specifically, we begin by drawing random numbers from a standard uniform distribution defined over the interval $(0,1)$. Subsequently, these randomly generated values are projected row-by-row onto the unit simplex $\Delta^i_{K-1}$. Furthermore, at locations corresponding to fidelity points, these values are adjusted to correspond to pure phases.
On the other hand, the initialization $\bU_0 = \hat \bU$ is used for the FW algorithms.


We have tested our algorithms on both networks for data segmentation and images for image labeling, as detailed in Tables \ref{tab:names22} and \ref{tab:names_im}, respectively. Tables report the number $n$ of nodes (i.e. pixels for images) in the data set, the number of classes $K$ and the sparsity sp($\bL_s$) of the Laplacian matrix $\bL_s$ (only for the networks in Table \ref{tab:names22}). 

In detail, we considered the widely used Lancichinetti-Fortunato-Radicchi (LFR) benchmark data set \footnote{LFR networks are available at \url{https://github.com/GiulioRossetti/cdlib\_datasets}. The graph parameters reported in Table \ref{tab:names22} correspond to LFR(number of nodes, average degree, min community, mixing coefficient).} for different numbers of nodes and mixing coefficients (0.1 and 0.2) and the 5 real networks: Twitch,  LastFM, Facebook and Amazon (computers and photos). 
In all the networks, for each class we correctly labeled 1/3 of randomly sampled nodes, 1/10 for LastFM (the unknown nodes are initialized as $1/K$ for each class). For all data sets,  the matrix $\bL_s$ is retrieved from the available graph adjacency matrix and results in a sparse matrix. Regarding the imaging problems in Table \ref{tab:names_im}\footnote{The {\em beach} image can be downloaded from \url{https://github.com/mstoll1602/GreedyFrankWolfe}, while the other three images are available at \url{https://github.com/Visillect/colorsegdataset}},  the matrix-vector products with the graph Laplacian is performed using an NFFT approach \cite{alfke2018nfft}. This function approximates the matrix vector product with the fully connected graph Laplacian while avoiding the complexity of having to deal with a dense matrix and the corresponding numerical cost. The images are resized versions of the original ones. For the {\em beach}, initial and labeled images are shown in the forthcoming Figure \ref{fig:beach}, for the others in the Supplementary Materials (SM), Figures SM2, SM4 and SM6.

In the next sections, solvers will be compared mainly using two performance measures: the accuracy ($\mathcal{A}$), computed as the percentage of correctly classified nodes (correctly classified pixels in the case of images), and the elapsed time ($\mathcal{T}$) in seconds. 
In order to compute $\mathcal{A}$ for image labeling, we generated their ground truths as explained in SM Section SM1.1.

\begin{table}[!ht]
\small
    \centering
    \begin{tabular}{l r r r | l r r r }
    \toprule
   Data set & $n$ & $K$ & sp($\bL_s$) & Data set  & $n$ & $K$ & sp($\bL_s$)  \\
    \midrule    
     LFR (1000, 5, 50, 0.1/0.2)  \cite{LFR}& 1000  & 11 & $5\,10^{-3}$   & Twitch \cite{snapnets,rozemberczki2021twitch}& 168114  & 21 & $4\,10^{-4}$ \\ 
  LFR (5000, 5, 50, 0.1/0.2)  \cite{LFR}& 5000  & 62 & $10^{-3}$  & LastFM \cite{snapnets,feather}& 7624  & 9 & $10^{-3}$\\ 
    LFR (10000, 5, 50, 0.1/0.2) \cite{LFR}& 10000  & 120 & $5\,10^{-4}$ & Facebook \cite{snapnets,rozemberczki2019multiscale}& 22470  & 4 & $7\,10^{-4}$\\ 
  LFR (50000, 5, 50, 0.1/0.2) \cite{LFR}& 50000  & 426 & $10^{-4}$  & Amazon (pc) \cite{amazon} & 13752  & 10  & $2\,10^{-3}$\\ 
   LFR (100000, 5, 50, 0.1/0.2) \cite{LFR}& 100000  & 585 & $5\,10^{-5}$ & Amazon (pic) \cite{amazon} & 7650  & 8  & $4\,10^{-3}$\\ 
          \bottomrule
   \end{tabular}
    \caption{Data set details for data segmentation.}
    \label{tab:names22}
\end{table}

\begin{table}[!ht]
\small
    \centering
    \begin{tabular}{ l r r  |  l r r }
    \toprule
    Image & $n$ & $K$ & Image & $n$ & $K$ \\
    \midrule
   {\em Beach} \cite{bergermann2021semi} & 38220  & 4 &  {\em 4 geometric figures} \cite{smagina2019linear}& 25920  & 5 \\ 
    {\em 3 geometric figures} \cite{smagina2019linear}& 35828  & 4  & {\em Sheets of paper} \cite{barnard2002data,smagina2019linear}& 16500  & 8 \\ 
             \bottomrule
   \end{tabular}
    \caption{Data set details for image labeling.}
    \label{tab:names_im}
\end{table}


\subsection{Preliminary parameter tuning}

In this section, we investigate the role of the weights $\omega$, $\mu$ and $\varepsilon$ within the objective functions (\ref{gl_vv}) and (\ref{gl_vv2}) and in (\ref{mbo_update}), which correspond to different graph-based semi-supervised learning models for multiclass segmentation.

Earlier studies \cite{bergermann2021semi,BerF12,GarMBFP14} provide limited discussion regarding how the CS and MBO methods rely on the parameters $\omega_0$ and $\mu$. Therefore, we considered the values $\omega_0 = 10^i$, $i \in \{1, 2, 3, 4\}$ and $\mu \in \{5\,10^2, 10^2, 5\, 10, 10, 5\, 10^{-1}, 10^{-1}, 5\,10^{-2}, 10^{-2}, \- 5\, 10^{-3}\}$  (for CS only) and noticed a variation from 10\% to 20\% in terms of accuracy in the computed  solution. Therefore, in the experiments, we used the parameters that gave the best accuracy on an average of 10 runs. These best parameters are reported in Table \ref{tab:bestCSMBO}.
The table also reports the number $K_{eig}$ of the eigenvalues/eigenvectors chosen for each test problem. We note that the accuracy obtained using CS and MBO strongly depends on this choice and that
we selected the smallest value that gives the highest accuracy in our experience. Clearly, a large value of $K_{eig}$ (e.g., for Facebook and Amazon networks) may also impact on the overall elapsed time.

\begin{table}[!ht]
\small
    \centering
    \begin{tabular}{l r r  r r r r  }
    \toprule
    &  \multicolumn{1}{c}{LFR ($n\le 10000) $ } & \multicolumn{1}{c}{LFR ($n\ge 50000) $ } &  \multicolumn{1}{c}{Twitch} &  \multicolumn{1}{c}{LastFM} &  \multicolumn{1}{c}{Facebook} &  \multicolumn{1}{c}{Amazon} \\
    \midrule
  $\omega_0$ & 100 & 100 & 100 & 1000 & 100  & 100  \\ 
  $\mu $     & $100$ & $100$ & $10$ & 50 & 10 & 500  \\
  $K_{eig}$  & 100  & 200 & 6  & 15  & 100 & 150 \\ 
        \bottomrule
   \end{tabular}
\vskip 0.2cm
      \begin{tabular}{l c r r r r  }
    \toprule
    &  \multicolumn{1}{c}{\em Beach} &  \multicolumn{1}{c}{\em 3 geometric figures} &  \multicolumn{1}{c}{{\em 4 geometric figures}} &  \multicolumn{1}{c}{{\em Sheets of paper}} &  \multicolumn{1}{c}{ } \\
    \midrule
  $\omega_0$ & $10^{5}$ & $10^{5}$& $10^{5}$ &$10^{5}$  &   \\ 
  $\mu $     & $10^{-1}$ & $10^{-1}$ & $10^{-1}$ & $10^{-1}$ &   \\
  $K_{eig}$  & 7 (CS) and 3 (MBO)  & 6   & 6  & 20 & \\
        \bottomrule
   \end{tabular}
    \caption{Best parameter configuration for CS and MBO.}
    \label{tab:bestCSMBO}
\end{table}

We also considered values $\omega_0 = 10^i$, $i \in \{1, 2, 3, 4\}$ and $\varepsilon \in \{5\,10^2, 10^2, 5\, 10, 10,  5\, 10^{-1}, 10^{-1}, \\5\,10^{-2}, 10^{-2}, 5\, 10^{-3}\}$ in the objective  (\ref{gl_vv2}) and analyzed the performance of GFW in terms of both accuracy and elapsed time.
Figure \ref{fig:bestGFW} shows these values varying $\varepsilon$ and $\omega_0$ for the Facebook network as a representative example for all the datasets. The figure shows a double behaviour of GFW: for some choices of the parameters (those corresponding to blue dots) GFW terminates in one step, being in fact OSFW and therefore the fastest version; for some other values of the parameters (those corresponding to red dots) GFW reaches the maximum accuracy.
Therefore, in the next section we use  the following parameter values
$$\omega_0 = 1000
\quad \mbox{ and } \quad \varepsilon = 50,$$ for experiments on data from networks. For image labeling we used $$\omega_0=10^5 \quad \mbox{ and } \quad \varepsilon = 10^{-1}$$
that guarantee the one-shot behavior for GFW, see \Cref{alg:FW3} and \Cref{nonconvb2}. 

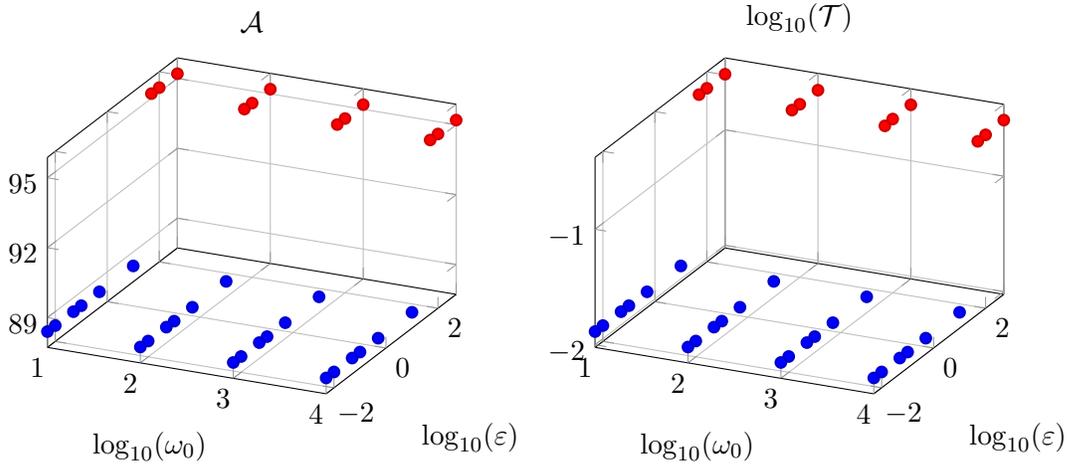
\begin{figure}
\begin{tikzpicture}
\begin{axis}[width=.4 * \linewidth, 
    xlabel=$\log_{10}(\omega_0)$,grid=major,ztick={89,92,95},	
    ylabel=$\log_{10}(\varepsilon)$,
    title=$\mathcal{A}$,
]
    \addplot3[thick, only marks, scatter]  table {
        x   y   z
       1.0000    2.6990   95.2000
    1.0000    2.0000   95.2000
    1.0000    1.6990   95.2000
    1.0000    1.0000   88.4000
    1.0000   -0.3010   88.4000
    1.0000   -1.0000   88.4000
    1.0000   -1.3010   88.4000
    1.0000   -2.0000   88.4000
    1.0000   -2.3010   88.4000
    2.0000    2.6990   95.2000
    2.0000    2.0000   95.2000
    2.0000    1.6990   95.2000
    2.0000    1.0000   88.4000
    2.0000   -0.3010   88.4000
    2.0000   -1.0000   88.4000
    2.0000   -1.3010   88.4000
    2.0000   -2.0000   88.4000
    2.0000   -2.3010   88.4000
    3.0000    2.6990   95.2000
    3.0000    2.0000   95.2000
    3.0000    1.6990   95.2000
    3.0000    1.0000   88.4000
    3.0000   -0.3010   88.4000
    3.0000   -1.0000   88.4000
    3.0000   -1.3010   88.4000
    3.0000   -2.0000   88.4000
    3.0000   -2.3010   88.4000
    4.0000    2.6990   95.2000
    4.0000    2.0000   95.2000
    4.0000    1.6990   95.2000
    4.0000    1.0000   88.4000
    4.0000   -0.3010   88.4000
    4.0000   -1.0000   88.4000
    4.0000   -1.3010   88.4000
    4.0000   -2.0000   88.4000
    4.0000   -2.3010   88.4000
    };
\end{axis}
\end{tikzpicture}
\begin{tikzpicture}
\begin{axis}[colorbar, colorbar style={ticks = none},width=.4 * \linewidth, 
    xlabel=$\log_{10}(\omega_0)$, ztick={-2,-1,0},	
    grid=major,
    ylabel=$\log_{10}(\varepsilon)$,
    title=  $\log_{10}(\mathcal{T})$,]
    \addplot3[thick, only marks, scatter]  table {        x   y   z
    1.0000    2.6990   -0.5186
    1.0000    2.0000   -0.5200
    1.0000    1.6990   -0.5229
    1.0000    1.0000   -1.8861
    1.0000   -0.3010   -1.8861
    1.0000   -1.0000   -1.8861
    1.0000   -1.3010   -1.8861
    1.0000   -2.0000   -1.8861
    1.0000   -2.3010   -1.8861
    2.0000    2.6990   -0.5229
    2.0000    2.0000   -0.5243
    2.0000    1.6990   -0.5229
    2.0000    1.0000   -1.8861
    2.0000   -0.3010   -1.8861
    2.0000   -1.0000   -1.8861
    2.0000   -1.3010   -1.8861
    2.0000   -2.0000   -1.8861
    2.0000   -2.3010   -1.8861
    3.0000    2.6990   -0.5157
    3.0000    2.0000   -0.5186
    3.0000    1.6990   -0.5258
    3.0000    1.0000   -1.8861
    3.0000   -0.3010   -1.8861
    3.0000   -1.0000   -1.8861
    3.0000   -1.3010   -1.8861
    3.0000   -2.0000   -1.8861
    3.0000   -2.3010   -1.8861
    4.0000    2.6990   -0.5143
    4.0000    2.0000   -0.5214
    4.0000    1.6990   -0.5258
    4.0000    1.0000   -1.8861
    4.0000   -0.3010   -1.8861
    4.0000   -1.0000   -1.8861
    4.0000   -1.3010   -1.8861
    4.0000   -2.0000   -1.8861
    4.0000   -2.3010   -1.8861
    };
    \end{axis}
\end{tikzpicture}
\caption{Facebook network: values of accuracy ($\mathcal{A}$) and base-10 logarithm of elapsed time ($\log_{10}(\mathcal{T})$) varying the parameters $\omega_0$ and $\varepsilon$ in the objective function (\ref{gradgl_vv2}).}\label{fig:bestGFW}
\end{figure}


\subsection{Numerical results}
Table \ref{tab:res_net} collects averaged results obtained using GFW 
and the competing solvers CS and MBO on network data (Table \ref{tab:names22}), over 10 runs. The symbol $\infty$ means that the elapsed time is above 1 hour.

Focusing on LFR problems, we observe that GFW always gives the best accuracy in short time and the gain increases as the number of nodes increases. Moreover, while the elapsed time for the FW solver stays bounded as $n$ and $K$ increase, both CS and MBO take longer time.
In details, these runs highlight on the main computational bottleneck of CS and MBO, that is
the cost of the projections over simplices that is quite high when the number of vertices is large.
Moreover, the large elapsed time of CS is due to the evaluation of the elements $T_{i,j}$
in (\ref{eq_Tik}) that is rather time consuming as $K$ increases. 

Focusing on the real networks, GFW always roughly computes the solution with 10\% of better accuracy wrt CS and MBO in much less time, except for the Twitch network for which MBO is the fastest. 
In fact, for Twitch MBO takes a short time as it employs only few iterations (and then only few projections). 
Finally, we remark that for Facebook and Amazon the elapsed time of CS and MBO is affected by the nonnegligible cost of the initial eigendecomposition of, although sparse, large matrices (see Table \ref{tab:bestCSMBO}).

\begin{table}[!ht]
    \centering
    \begin{tabular}{l| r r | r r| r r}
    \toprule
  \multicolumn{1}{c}{}  &  \multicolumn{2}{c}{GFW  } &
\multicolumn{2}{c}{CS}   &  \multicolumn{2}{c}{MBO} \\ \midrule 
Data set  & $\mathcal{T}$ & $\mathcal{A}$ & $\mathcal{T}$ & $\mathcal{A}$& $\mathcal{T}$ & $\mathcal{A}$ \\
         \midrule
         LFR 1000/0.1&  0.02 & 99.0\% & 0.32 & 98.8\% & 0.39 & 98.4\%\\
         LFR 5000/0.1 &  0.27 & 99.2\% & 85.45 & 98.9\% & 5.45 & 98.9\%\\ 
         LFR 10000/0.1 &  0.70 & 99.4\% & 587.78 &  91.3\% & 17.31 & 96.8\% \\ 
         LFR 50000/0.1 &   17.23 & 98.1\%  & $\infty$ &  - & 2575.31& 84.8\% \\  
        LFR 100000/0.1 &  62.02 & 83.7\% & $\infty$ & - & $\infty$ & - \\ 
         \midrule
         LFR 1000/0.2   & 0.03 & 92.5\% & 0.39 & 91.1\% & 0.93 & 91.2\% \\
        LFR 5000/0.2 & 0.33  & 93.8\% & 65.5 & 92.4\% & 18.73 & 92.5\%\\
         LFR 10000/0.2 &  0.68 & 94.9\% & 807.03 & 81.5\%  & 40.99 & 86.6\%\\ 
         LFR 50000/0.2 &  18.25 & 90.9\% & $\infty$ & - & 2749.85 & 76.5\% \\ 
         LFR 100000/0.2 & 64.05 & 77.5\% & $\infty$ & - & $\infty$ & - \\ 
         \midrule
         Twitch &    15.12 & 94.9\%& 41.68& 84.6\% & 8.92 & 84.7\% \\
         LastFM &    0.12 & 85.6\% & 1.42 & 76.4\% & 1.71 & 75.9\%\\
         Facebook &   0.30 & 95.2\% & 19.84 & 74.7\% & 20.99 & 75.7\%\\
         Amazon (computer) & 0.31  & 91.8\% & 40.3  & 77.7\% & 51.7 & 81.1\%\\
         Amazon (photo) & 0.12   & 94.0\% & 25.6 & 85.9\% & 30.7  & 89.2\%\\ 
         \bottomrule
    \end{tabular}
    \caption{Network data sets of Table \ref{tab:names22}: results in terms of accuracy $\mathcal{A}$ and
    elapsed time $\mathcal{T}$ in seconds for GFW, CS and MBO.}
    \label{tab:res_net}
\end{table}

\begin{table}[!ht]
    \centering
    \begin{tabular}{l|r r|r r| r r }
    \toprule
   \multicolumn{1}{c}{}  &  \multicolumn{2}{c}{GFW} & \multicolumn{2}{c}{CS} & \multicolumn{2}{c}{MBO} \\
    \midrule
Image & $\mathcal{T}$ & $\mathcal{A}$  & $\mathcal{T}$&  $\mathcal{A}$ & $\mathcal{T}$ &  $\mathcal{A}$ \\
  \midrule
{\em Beach}  & 37.10 & 95.7\% & 87.52  & 91.2\%  & 60.96 & 84.9\%\\
  {\em 3 geometric figures}  & 57.95 & 98.6\% & 133.41 & 97.7\% & 114.22 &  87.2\%\\
 {\em 4 geometric figures}  & 35.70 & 98.4\% & 98.29 &  93.4\% & 78.92 &  83.3\%\\
 {\em Sheets of paper}  & 10.47 & 97.9\%& 45.67 &  83.7\% & 26.72 &  93.2\%\\
         \bottomrule
    \end{tabular}
    \caption{Image labeling: elapsed cpu time $\mathcal{T}$ 
    and accuracy $\mathcal{A}$.   }
    \label{tab:im_Res}
\end{table}

Table \ref{tab:im_Res} shows the elapsed time employed by the GFW (with one-shot behavior), CS and MBO and the accuracy obtained in the segmentation of the images in Table \ref{tab:names_im}. Evidently, the FW method is roughly faster by a factor of 2 (or 3) with respect to CS and MBO.
In these runs, CS and MBO  greatly suffer from the time-consuming initial eigendecomposition, while the computational effort GFW is mainly due to the matrix-matrix product with $\bL_s$. As for the accuracy,  GFW  returns the highest values in all experiments. 
In the case of the {\em beach} image, Figure \ref{fig:beach}, the four classes are reasonably identified by GFW, while CS predicts as "sea" some "beach" pixels and MBO completely misses the "sea", which is classified as "sky", see the confusion matrices in Figure \ref{fig:beach_cm}. 

Figures and confusion matrices related to experiments on   {\em 3} and {\em 4 geometric figures} and {\em Sheets of paper} are contained in Section SM1.2 of Supplementary Materials. 
For {\em 3 geometric figures} image, the accuracy reported in Table \ref{tab:im_Res} of MBO is misleading. Indeed, this image presents a strongly imbalanced class (larger of a factor 6, Figure SM2), consequently, even if MBO completely fails to identify another class, it returns a good value of accuracy, Figure SM3. Also for {\em 4 geometric figures} image, the classes are imbalanced, Figure SM4, and, in this case CS is unsuccessful in the classification of a class, while MBO completely miscarry two minority classes, Figure SM5. In the case of {\em sheets of paper} image, CS and MBO mixed some classes more, Figures SM6 and SM7. Therefore, even if the accuracy values seem high for CS and MBO, the quality of the segmentation is poor. Conversely, our new approach  provides  higher accuracy values and better segmentations.


\begin{figure}[htb!]
\begin{center}
 \setlength\figureheight{0.5\linewidth} 	
	\subfloat[Original (top) and labeled (bottom) image]{
 		\includegraphics[width=0.5\textwidth]{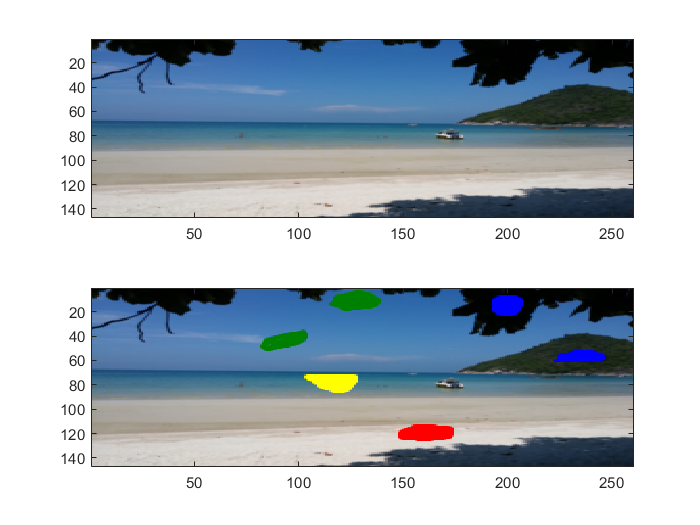}
	}
	\subfloat[Segmentation with GFW (OSFW)]{
		\includegraphics[width=0.5\textwidth]{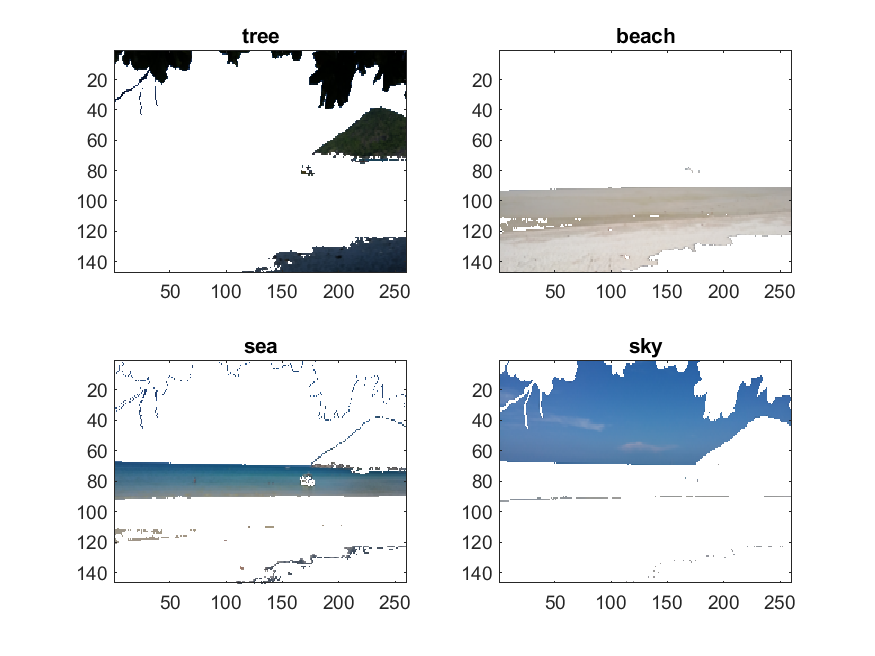}
	}
\end{center}
\begin{center}
 \setlength\figureheight{0.33\linewidth} 	
\subfloat[Segmentation with CS]{
		\includegraphics[width=0.5\textwidth]{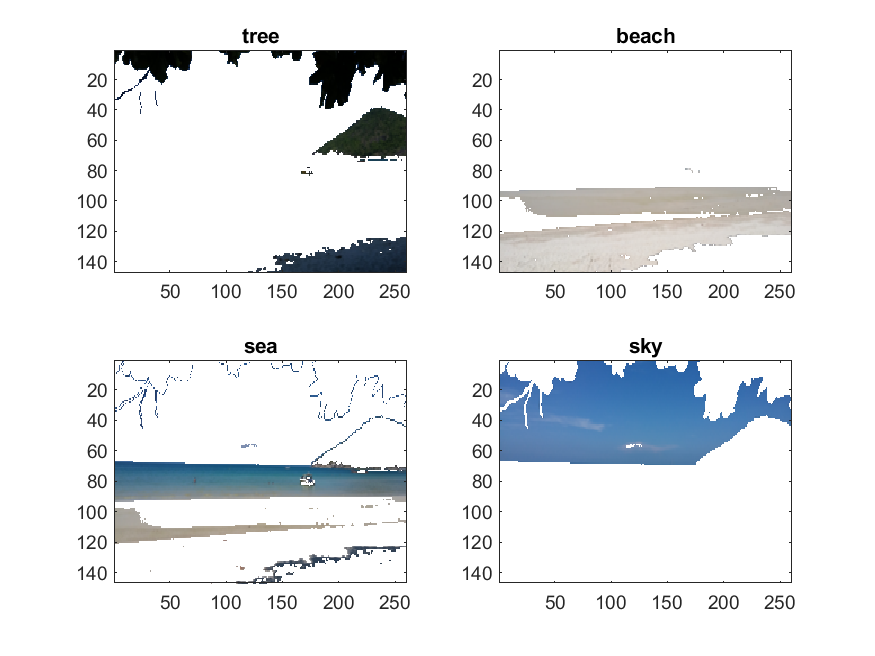}
	}
	\subfloat[Segmentation with MBO]{
		\includegraphics[width=0.5\textwidth]{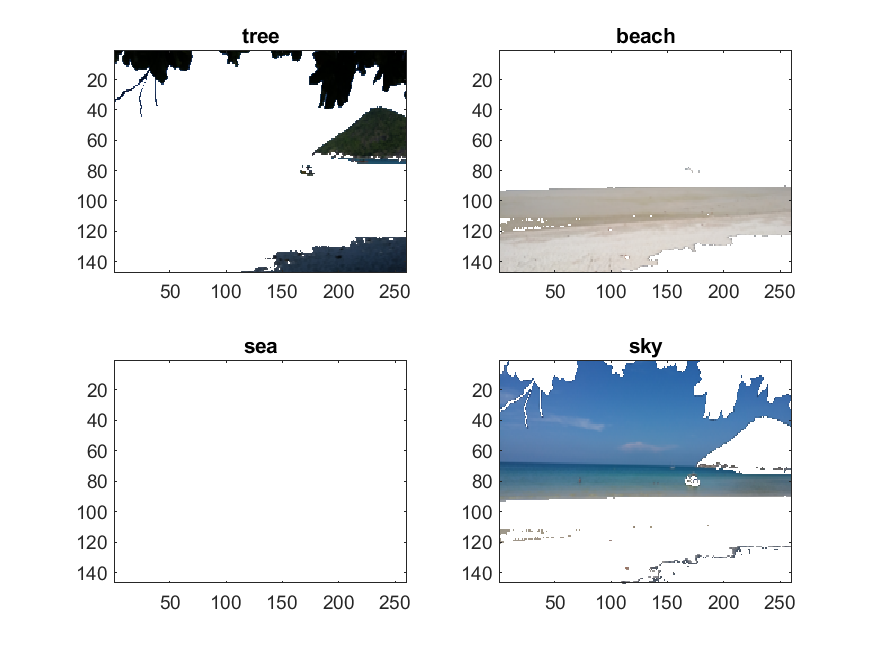}
	}
\end{center}
		\caption{{\em Beach} image: labeling results with GFW (b), CS (c) and MBO (d).}\label{fig:beach}
\end{figure}

\begin{figure}[htb!]
\begin{center}
 \setlength\figureheight{0.5\linewidth} 	
	\subfloat[Confusion matrix GFW]{
 		\includegraphics[width=0.5\textwidth]{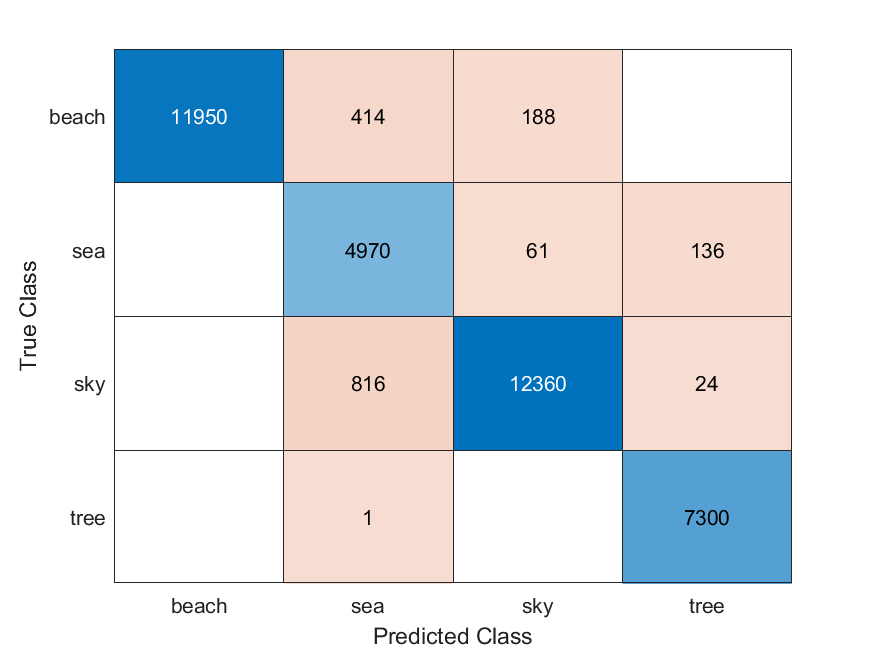}
	}
	\subfloat[Confusion matrix CS]{
		\includegraphics[width=0.5\textwidth]{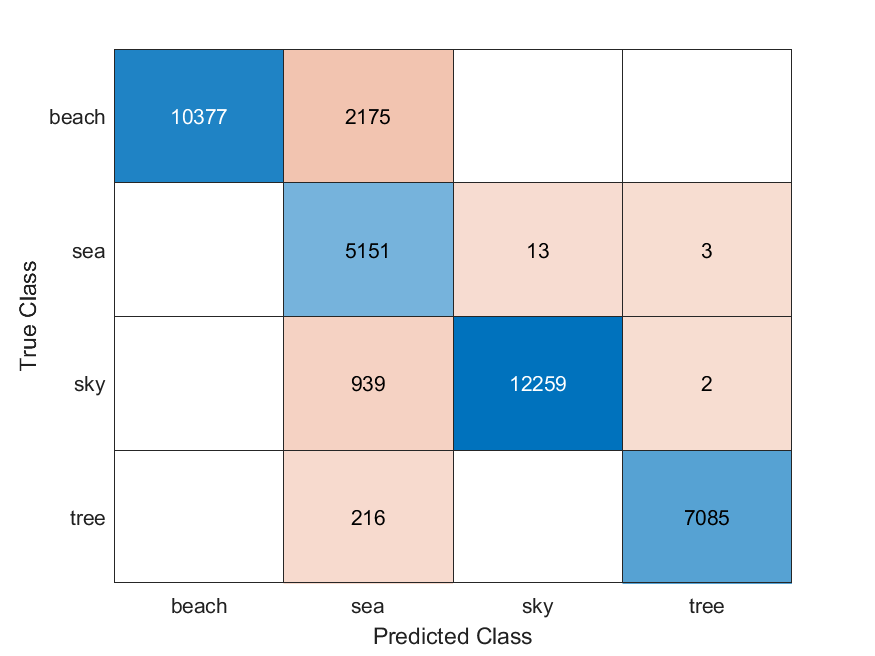}
	}
    \end{center}
\begin{center}
 \setlength\figureheight{0.33\linewidth} 	
\subfloat[Confusion matrix MBO]{
		\includegraphics[width=0.5\textwidth]{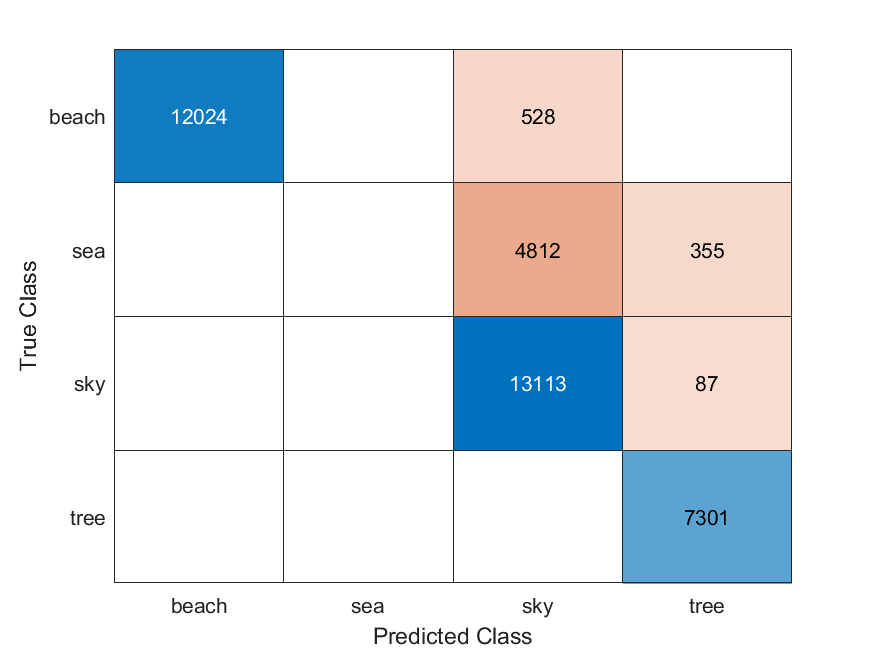}
	}
\end{center}
		\caption{{\em Beach} image: confusion matrix results with GFW (a), CS (b) and MBO (c).}\label{fig:beach_cm}
\end{figure}

\section{Conclusions and Future Work}\label{sec:conclusions}

We considered the graph-based segmentation framework and addressed key computational bottlenecks shared by classic methods in this context: the eigendecomposition of the graph Laplacian and the projection on the unit simplex. We therefore proposed a new model problem that still uses the Ginzburg-Landau energy functional but which ensures valid partitions (i.e., binary solutions) by employing a suitable penalty term. For the minimization of such a model, we defined 
a tailored Frank-Wolfe algorithm that achieves both theoretical and practical improvements  compared to traditional PDE-inspired methods (CS and MBO).  We illustrated the effectiveness of our proposal on several test problems, including real networks and images, showing both the validity of our model and the gain in terms of computational time attained with our algorithm.

The approach that has been proposed  paves the way for novel and captivating avenues of research. First, an in-depth theoretical and computational analysis related to the penalty parameter for different graph topologies might be carried out, thus strengthening the model's guarantees and giving the possibility to broaden the applicability of the method to dynamic or evolving graphs. Second, the use of acceleration (e.g., momentum-based), decomposition (e.g., block-coordinate Frank-Wolfe-like methods) and stochastic-approximation strategies (e.g., stochastic Frank-Wolfe-like methods) might help to further reduce the computational burden when dealing with massive datasets. Third, extensions that allow one to handle higher-order graph data structures (e.g., multilayer, hyperedges, and so on) might represent another interesting research topic. Finally, exploring distributed implementations could make real-time segmentation viable for large-scale imaging and network analysis.

\appendix

\section{Proof of \Cref{pr:diam}}
\begin{proof} The diameter is defined as the maximum distance between two matrices in the feasible set, that is
$$diam(\Sigma)=\max_{\bar\bU, \bU\in \Sigma}\|\bar \bU-\bU\|_F.$$
Since each row is constrained to the unit simplex, we have
$$\max_{\bar\bU, \bU\in \Sigma}\|\bar \bU-\bU\|^2_F=\sum_{i=1}^n\max_{\bar\bu_i, \bu_i\in \Sigma^{K}}\|\bar\bu_i-\bu_i\|^2_2=2n,$$
thus proving the desired result.
\end{proof}

\section{Proof of \Cref{pr:LCG}}
\begin{proof}
Let us consider the gradient of the function given in \eqref{gradgl_vv2g}. We want to prove that
$$\|\nabla E(\bar\bU)-\nabla E(\bU)\|_F\leq L\|\bar \bU-\bU\|_F.$$
We now separately bound the 3 terms of the gradient in \eqref{gradgl_vv2g}:
\begin{itemize}
\item  term $\bL_{s}\bU$. Let \( \bV = \bar\bU - \bU \), so we can write:
\[
\|\bL_s \bV\|_F^2 = \sum_{j=1}^p \|\bL_s \mathbf{v}_j\|_2^2 \leq \|\bL_s\|_2^2 \sum_{j=1}^p \|\mathbf{v}_j\|_2^2 = \|\bL_s\|_2^2 \|\bV\|_F^2.
\]

Taking square roots:
\[
\|\bL_s (\bar\bU - \bU)\|_F = \|\bL_s \bV\|_F \leq \|\bL_s\|_2 \|\bV\|_F = \|\bL_s\|_2 \|\bar\bU - \bU\|_F.
\]

We thus have for this term
$$\|\bL_{s}\bar\bU-\bL_{s}\bU\|_F=\|\bL_{s}(\bar\bU-\bU)\|_F\leq \|\bL_s\|_2\|\bar\bU-\bU\|_F\leq \lambda_{max}(\bL_s)\|\bar \bU-\bU\|_F.$$

\item  term $\frac{1}{\epsilon}(\mathbf{1} - 2 \bU)$. It is easy to see that
$$\|\frac{2}{\epsilon}(\bar\bU-\bU)\|_F\leq \frac{2}{\varepsilon}\|\bar\bU-\bU\|_F.$$
\item  term  $- \bD_\omega (\hat \bU - \bU)$.  Since  $\bD_\omega$ is diagonal, we have
$$\|\bD_{\omega}(\bU-\bar \bU)\|_F\leq \,d_{\omega}^{max}\|\bar \bU-\bU\|_F.$$
\end{itemize} 
So the result holds.
\end{proof}

\section{Proof of \Cref{th:equivalence}}
\begin{proof} If we choose a value $\varepsilon\in(0,\bar \varepsilon]$, the Hessian
$\nabla^2 E(\bU)=  \bI_K \otimes (\bL_{s}+\bD_\omega-\frac{2}{\epsilon} \bI_{n})$ is negative definite. As a result, the function $E(\bU)$ is strictly concave and every local (and global) minimum $\bU^*$ is strict and is a vertex of the feasible set $\Sigma$.
In order to see this, by contradiction assume that $\bU^*$ is not an extreme point of $\Sigma$. Then there exists a neighborhood $B(\bU^*,\rho)$, with $\rho>0$,  of suitable dimension fully contained in the feasible set. We can hence find a direction $\tilde \bD=\tilde \bU- \bU^*$, with  $\tilde\bU\in B(\bU^*,\rho)$, such that both $\tilde \bD$ and $-\tilde\bD$ are feasible directions. By first order optimality conditions we can write $$\langle\nabla E(\bU^*), \bD \rangle\geq 0,$$
for any feasible direction $\bD$. Then $\langle\nabla E(\bU^*), \tilde\bD \rangle=0$. Thanks to the way we choose $\varepsilon$, we also have
$$\langle\tilde \bD, \nabla^2 E(\bU^*) \tilde\bD \rangle<0.$$
Thus we can write 
$$E(\tilde\bU)=E(\bU^*+\tilde \bD)= E(\bU^*)+\langle\nabla E(\bU^*), \tilde \bD \rangle+\langle\tilde \bD, \nabla^2 E(\bU^*) \tilde\bD\rangle<E(\bU^*),$$
which contradicts optimality of $\bU^*$
(see, e.g., \cite{rbook} for further details). By using similar arguments, we can easily see that $\bU^*$ is strict. Since $\Sigma$ is defined as in \eqref{fs}, then  $\bU^*\in\{0,1\}^{n\times K}$ and the result (i) is proved. 

In order to prove point (ii), we  first need to show that any global minimizer of \eqref{penalty_form} is also a global minimizer of \eqref{binary_form}.
Since for any global minimizer $\bU^*$ of \eqref{penalty_form}, we have that $\bU^* \in \Sigma \cap \{0,1\}^{n\times K}=: \bar \Sigma$, and
$E(\bU^*)=\bar E(\bU^*)\leq \bar E(\bU)$
for all $\bU \in \bar \Sigma$,  then the result holds. Now we need to prove that any global minimizer of \eqref{binary_form} is also a global minimizer of\eqref{penalty_form}. By contradiction, assume there exists $\varepsilon \in (0,\bar \varepsilon]$ such that
$$E(\hat \bU)<E(\bU^*),$$
with $\hat \bU \in \Sigma$ global minimizer of \eqref{penalty_form} and $ \bU^* \in  \bar \Sigma$ global minimizer of \eqref{binary_form}.  Recalling that for all $\varepsilon \in (0,\bar \varepsilon]$  we proved that $\hat \bU \in \bar \Sigma$
we then have 
$$\bar E(\hat \bU)< \bar E(\bU^*),$$
which contradicts global optimality of $\bU^*$. Thus point (ii) is proved.
\end{proof}

\section{Proof of \Cref{th:nonconvb}}
\begin{proof}
     We first show that $\GLMO_\Sigma (\bG)=\LMO_\Sigma(\bG)$ 
     where $\bG = \nabla E(\bU)$ and $\bU\in \Sigma$, when  $\varepsilon\leq \tilde \varepsilon$. If $\bU\in \Sigma$ is such that $\bu_i\in\{0,1\}^K$ for some $i$, then it is easy to see that $(\bg_i)_j\simeq{\varepsilon}^{-1}$ if 
    $(\bu_i)_j=0$ and $(\bg_i)_j\simeq-{\varepsilon}^{-1}$ otherwise. Since in the \LMO\ we solve for each $i$ the following problem: 
    \begin{equation}\label{eq:LMOpr1}
    \min_{\by\in \Delta^i_{K-1}} \bg_i^\top \by,
    \end{equation}
    then we have $\by= \bu_i$. If $\bU\in \Sigma$ is such that $\bu_i\notin\{0,1\}^K$, but $supp(\bu_i)<K$, we have that $(\bg_i)_j\simeq{\varepsilon}^{-1}$ for all $j$ such that $(\bu_i)_j=0$. We 
    now show that thanks to the way we set $\varepsilon$ there always exists at least one component of the gradient smaller than $\varepsilon^{-1}$ and thus we have $\by_j=0$ in the solution of problem \eqref{eq:LMOpr1} when $(\bu_i)_j=0$. Notice that the gradient of the function without penalty term is such that $\|\nabla \bar E(\bU)\|_{\infty}\leq\rho^{max}+d_\omega^{max}$, furthermore the $j$-th gradient component related to the penalty is $\varepsilon^{-1}[1-2(\bu_i)_j]$, so the worst case scenario for us is when all components have the same weight, with an upper bound given by $\varepsilon^{-1}[1-2K^{-1}]$. Hence, taking into account the way we choose $\varepsilon$ in problem \eqref{penalty_form}, we have 
    $${\varepsilon}^{-1}>\varepsilon^{-1}[1-2K^{-1}]+2( \rho^{max}+d_\omega^{max})$$ and a component $(\bu_i)_j=0$ will stay zero when solving problem \eqref{eq:LMOpr1} and the result is proved.

	We now analyze the convergence rate of the method by considering two different cases. \\	
\textbf{Case 1.} $\bar{\alpha}_k < 1$. 
	It is easy to see that  $\bar{\alpha}_k =  \frac{g_k}{{L\n{\bD_k}_F^2}} $ in this case and inequality \cref{eq:rho} becomes
	\begin{equation}
    \label{c1}
	E(\bU_k) - E(\bU_k + \alpha_k\bD_k) \geq  \frac{\rho}{L{\n{\bD_k}^2_F}} g_k^2 \geq \frac{\rho g_k^2}{L2n} \, ,
	\end{equation}
    where the last inequality is obtained by considering Proposition \ref{pr:diam}. \\   
\textbf{Case 2.} $\bar{\alpha}_k = 1$. 
	We start by considering the standard descent lemma~\cite[Proposition 6.1.2]{bertsekas2015convex} applied to $E$ with center $\bU_k$ and $\alpha = 1$
	$$ E(\bU_{k+1}) = E(\bU_k + \bD_k) \leq E(\bU_{k}) + \langle\nabla E(\bU_{k}), \bD_k \rangle+ \frac{L}{2}\|\bD_k\|_F^2\, .  $$
	The condition that characterizes Case 2 gives $\min \left(\frac{-\langle\nabla E(\bU_{k}), \bD_k \rangle}{\|\bD_k\|_F^2L}, 1\right) =  \alpha_k = 1 $, thus we can write 
	\begin{equation*}
	\frac{-\langle\nabla E(\bU_{k}), \bD_k \rangle}{\|\bD_k\|_F^2L} \geq 1 \, \mbox{, and }\quad -L\n{\bD_k}_F^2 \geq  \langle\nabla E(\bU_{k}), \bD_k \rangle\, ,
	\end{equation*} 
	hence we obtain
	\begin{equation} \label{c2}
	E(\bU_{k}) - E(\bU_{k+1}) \geq -\langle\nabla E(\bU_{k}), \bD_k \rangle - \frac{L}{2}\|\bD_k\|_F^2 \geq - \frac{\langle\nabla E(\bU_{k}), \bD_k \rangle}{2} \geq \frac{g_k}{2}\, . 
	\end{equation}\\

Now, taking into account the two cases analyzed above, we can partition the iterations $\{0,1,\ldots,T-1\}$ into two subsets $N_1$ and $N_2$ defined as follows:
\[
N_1 = \{k < T \colon \bar{\alpha}_k < 1\}, \quad
N_2 = \{k < T \colon \bar{\alpha}_k = 1\} \, .
\]

Using~\eqref{c1} and~\eqref{c2}, we have:
        \begin{equation}
        \begin{split}
        E(\bU_{0})-E^*  &\ge \sum_{k=0}^{T-1} (E(\bU_{k})-E(\bU_{k+1})) \\
        & = \sum_{k \in N_1} (E(\bU_{k})-E(\bU_{k+1}))+
        \sum_{k \in N_2} (E(\bU_{k})-E(\bU_{k+1})) \\
        & \ge \sum_{k \in N_1} \frac{\rho g_k^2}{L2n} +
        \sum_{k \in N_2} \frac{g_k}{2} \\
        & \ge |N_1| \min_{k \in N_1} \frac{\rho g_k^2}{L2n} + |N_2| \min_{k \in N_2} \frac{g_k}{2}\\
        & \ge (|N_1|+|N_2|) \min \left(\frac{\rho (g_T^{*})^2}{L2n} ,\frac{g_T^{*}}{2} \right)\\
        & = T \min \left(\frac{\rho (g_T^{*})^2}{L2n} ,\frac{g_T^{*}}{2} \right)
        \, ,
        \end{split}
        \end{equation}
        where the last inequality is obtained using the definition of $g_T^{*}$.\\
	Now, if $T \min \left( \frac{\rho(g^*_{T})^2}{L2n},\frac{g^*_{T}}{2}  \right) =\frac{Tg^*_{T}}{2} $ we then have 
	\begin{equation} \label{g*}
	g^*_{T} \leq \frac{2(E(\bU_0) - E^*)}{T} \, ,
	\end{equation}
	and otherwise
	\begin{equation} \label{g*sqrt}
	g^*_{T} \leq \sqrt{\frac{L2n(E(\bU_0) - E^*)}{\rho T}}\, .
	\end{equation}
	The result is thus proved  by just considering the max between the right-hand sides in equation~\cref{g*} and \cref{g*sqrt}.
\end{proof}

\section{Proof of \Cref{nonstationary}}
\begin{proof}
Using the standard descent lemma~\cite[Proposition 6.1.2]{bertsekas2015convex} we obtain
	\begin{equation} \label{e11:std}
	E(\bU_k) - E(\bU_k + \alpha \bD_k) \geq \alpha g_k - \alpha^2 \frac{L\|\bD_k\|_F^2}{2}\, .
	\end{equation}
It follows 
	\begin{equation}
		E(\bU_k) - E(\bU_k + \alpha \bD_k) \geq \gamma \alpha g_k \quad \textnormal{for } \alpha \in \left[0, 2(1 - \gamma) \frac{g_k}{L\n{\bD_k}_F^2} \right] \, ,
	\end{equation}
	and
	$$\alpha_k >2\delta(1 - \gamma) \frac{g_k}{L\n{\bD_k}_F^2} \, . $$
	Therefore
	\begin{equation} \label{1Ar}
		\alpha_k \geq \min \left(1,2\delta(1 - \gamma) \frac{g_k}{L\n{\bD_k}_F^2} \right) \geq \min\{1,2\delta (1- \gamma)\}\bar{\alpha}_k \, ,
	\end{equation}
	which proves \cref{Arbaralpha}. Furthermore, it holds that
	\begin{equation}
		E(\bU_k) - E(\bU_k+ \alpha_k \bD_k) \geq \gamma \alpha_k g_k \geq \gamma\min\{1,2\delta (1- \gamma)\}\bar{\alpha}_k g_k \, ,
	\end{equation}
	by using  the Armijo condition \cref{Armijo} in the first inequality and \cref{Arbaralpha} in the second. 
	Thus, by $\gamma,\delta\in(0,1)$ and $\gamma(1-\gamma)\leq \frac{1}{4}$, our analysis shows that equation \cref{eq:rho} is satisfied given that $\rho$ is defined as $\rho = \gamma \min\{1, 2\delta (1 - \gamma)\}$ and that this value remains less than 1.
\end{proof} 

\section{Proof of \Cref{nonconvb2}}
\begin{proof} As proved in Theorem \ref{th:equivalence}, when $\varepsilon< \bar \varepsilon$ the function $E(\bU)$ is strictly concave. Since the point $\bS_0$ obtained through the \GLMO\ is  a vertex of the feasible set, so we have $\bS_0\in\{0,1\}^{n\times K}$. By exploiting strict concavity, we know that the univariate function $\phi(\alpha)=E(\bU+\alpha \bD_k)$ has its minimum in $\alpha=1$, which is the stepsize that a linesearch embedded in GFW would give. By exploiting first order strict concavity we can indeed write 
$$E(\bU_0) - E(\bU_1) > -\langle \nabla E(\bU_0), (\bU_1-\bU_0) \rangle > 0 \, .$$

Therefore $ \bU_1 \in \{0,1\}^{n\times K}$, and since $\varepsilon<\tilde \varepsilon$, we have that $\GLMO(\nabla E(\bU_1))$ (and the classic \LMO) would output $\bU_1$ meaning that $\bU_1$ is a stationary point for problem \eqref{penalty_form}. This proves our result.
\end{proof}

\end{document}